\documentclass[11pt]{article}

\usepackage{IEEEtrantools}
\usepackage{latexsym}
\usepackage{amsfonts}
\usepackage{amssymb}
\usepackage{amsmath,amsfonts,amsthm}
\usepackage{mathrsfs}
\usepackage[all]{xy}
\usepackage{epic}
\usepackage{eepic}
\usepackage{enumerate}
\usepackage{enumitem}
\usepackage{multirow}
\usepackage[breaklinks]{hyperref}

\usepackage{mathrsfs}
\usepackage[all]{xy}
\usepackage{tikz-cd}
\usepackage{epic}
\usepackage{eepic}
\usepackage{multirow}
\usepackage[breaklinks]{hyperref}
\usepackage{tikz}
\usetikzlibrary{matrix,arrows,positioning,calc,decorations.pathmorphing}
\usepackage{color}
\usepackage{bbm}

\newcommand{\bbbt}{\mathbb{T}}
\newcommand{\scrt}{\mathscr{T}}

\newcommand{\be}{\begin{equation}}
	\newcommand{\ee}{\end{equation}}
\newcommand{\bea}{\begin{eqnarray}}
	\newcommand{\eea}{\end{eqnarray}}
\newcommand{\bean}{\begin{eqnarray*}}
	\newcommand{\eean}{\end{eqnarray*}}
\newcommand{\brray}{\begin{array}}
	\newcommand{\erray}{\end{array}}
\newcommand{\biearray}{\begin{IEEEarray}{rCl}}
	\newcommand{\eiearray}{\end{IEEEarray}}

\newtheorem{dfn}{Definition}[section]
\newtheorem{thm}[dfn]{Theorem}
\newtheorem{lmma}[dfn]{Lemma}
\newtheorem{ppsn}[dfn]{Proposition}
\newtheorem{crlre}[dfn]{Corollary}
\newtheorem{xmpl}[dfn]{Example}
\newtheorem{rmrk}[dfn]{Remark}
\newtheoremstyle{case}{}{}{}{}{}{:}{ }{}
\theoremstyle{case}

\AtBeginEnvironment{proof}{\setcounter{case}{0}}

\newcommand{\bdfn}{\begin{dfn}\rm}
	\newcommand{\bthm}{\begin{thm}}
		\newcommand{\blmma}{\begin{lmma}}
			\newcommand{\bppsn}{\begin{ppsn}}
				\newcommand{\bcrlre}{\begin{crlre}}
					\newcommand{\bxmpl}{\begin{xmpl}}
						\newcommand{\brmrk}{\begin{rmrk}\rm}
							
							\newcommand{\edfn}{\end{dfn}}
						\newcommand{\ethm}{\end{thm}}
					\newcommand{\elmma}{\end{lmma}}
				\newcommand{\eppsn}{\end{ppsn}}
			\newcommand{\ecrlre}{\end{crlre}}
		\newcommand{\exmpl}{\end{xmpl}}
	\newcommand{\ermrk}{\end{rmrk}}

\newcommand{\bbz}{\mathbb{Z}}
\newcommand{\bbn}{\mathbb{N}}

\newcommand{\clh}{\mathcal{H}}

\newcommand{\clk}{\mathcal{K}}
\newcommand{\cll}{\mathcal{L}}

\newcommand{\prf}{\noindent{\it Proof\/}: }

\def \qed { \mbox{}\hfill
	$\Box$\vspace{1ex}}

\newcommand\restr[2]{{
		\left.\kern-\nulldelimiterspace 
		#1 
		\littletaller 
		\right|_{#2} 
}}

\newcommand{\littletaller}{\mathchoice{\vphantom{\big|}}{}{}{}}

\numberwithin{equation}{section}
\begin{document}
	
	\author{{\sc Surajit Biswas, Vinay Deshpande, Bipul Saurabh}}
	\title{Crystallization of the quantum Stiefel manifolds $SO_q(2n+1)/SO_q(2n-1)$}

\date{}
\maketitle

\begin{abstract}  
In this paper, we introduce the crystallization $C(SO_0(2n+1)/SO_0(2n-1))$ of the $C^*$-algebra $C(SO_q(2n+1)/SO_q(2n-1))$. We show that every irreducible representation of $C(SO_0(2n+1)/SO_0(2n-1))$ arises as the limit, as $q \to 0^+$, of a family of irreducible representations of $C(SO_q(2n+1)/SO_q(2n-1))$. As a consequence, we prove that $C(SO_0(2n+1)/SO_0(2n-1))$ is the universal $C^*$-algebra generated by a finite set of generators and relations. We then construct a chain of unital essential short exact sequences of $C^*$-algebras, enabling us to compute the $K$-groups of $C(SO_0(2n+1)/SO_0(2n-1))$ and determine explicit generators for them. Using the theory of homogeneous $C^*$-extensions, we prove that $C(SO_0(2n+1)/SO_0(2n-1))$ is isomorphic to $C(SO_q(2n+1)/SO_q(2n-1))$ for every $q \in (0,1)$. 
	\end{abstract}
	{\bf AMS Subject Classification No.:} {\large 58}B{\large 34}, {\large
	46}L{\large 80}, {\large 
	19}K{\large 33}\\
{\bf Keywords.}  Quotient space, crystallization, $K$-groups, homogeneous extensions.

\section{Introduction}

The theory of crystal bases, introduced independently by Kashiwara (\cite{Kas-1991aa}) and Lusztig (\cite{Lus-1990aa}), describes the limiting behavior of quantum groups as the deformation parameter $q$ approaches $0$. While this theory was originally developed within an algebraic framework, recent years have witnessed significant progress in understanding analogous limiting phenomena at the level of $C^*$-algebras. In this direction, Giri and Pal (\cite{GirPal-2026aa}) introduced the notion of the \emph{crystallization} of quantized function algebras for compact quantum groups of type $A_n$. Subsequently, Giri and Pal (\cite{GirPal-2024aa}) completely classified the irreducible representations of the crystallized algebra $C(SU_0(n+1))$ within their framework, proving that every irreducible representation arises as the limit, as $q \to 0^+$, of a family of irreducible representations of $C(SU_q(n+1))$. This limiting result was subsequently extended by Das and Dey \cite{DasDey-2026aa} to all nondegenerate representations. Soon after, Giselsson (\cite{Gis-2026aa}) proved that the crystallized algebra $C(SU_0(3))$ is isomorphic to a rank-two graph $C^*$-algebra. 
Later, Matassa and Yuncken (\cite{MatYun-2023aa}) developed a general theory of crystallization for compact quantum groups associated with all complex simple Lie algebras, demonstrating that the resulting crystallized algebras are higher-rank graph $C^*$-algebras, with the rank equal to the rank of the underlying Lie algebra. The main difference between these frameworks is that Matassa and Yuncken define an algebra consisting of the matrix coefficients of all irreducible co-representations and then take the limit as $q \to 0^+$, while Giri and Pal take the fundamental representation, take the matrix coefficients of that single representation, take the limit after normalizing appropriately, and then look at the $C^*$-algebra generated by them. Finally, Pal et al. (\cite{DasDeyPal-2026aa}) showed that both the Giri--Pal and Matassa--Yuncken approaches yield the same crystallization for the $A_n$ case.

This  naturally raise the question of whether similar crystallization phenomena occur for quantum homogeneous spaces. In the same paper (\cite{MatYun-2023aa}), 
Matassa and Yuncken  developed a framework for the crystallization of a large family of quantum homogeneous spaces. Let $K$ be a compact connected semisimple Lie group (not necessarily simply connected), let $\Delta$ be a set of simple roots, and let $Y_S$ be the corresponding principal torus bundle over the flag variety $X_S$ associated to a subset $S \subseteq \Delta$. Let $\mathcal{O}[K_0]$ be the quantized function algebra associated with the crystallization $C(K_0)$ of $C(K_q)$. Matassa and Yuncken defined $\mathcal{O}[Y_{S,0}]$ to be the subalgebra of the algebra $\mathcal{O}[K_0]$ generated by certain matrix coefficients. Specifically, if one takes the Lie algebra to be $\mathfrak{so}(2n+1)$, with $\Delta=\{\alpha_1, \dots, \alpha_n\}$ as the set of simple roots and $S=\{\alpha_1, \dots, \alpha_{n-1}\}$, then Definition~$4.9$ of \cite{MatYun-2023aa} yields a crystallization of the quantum Stiefel manifold $SO_q(2n+1)/SO_q(2n-1)$.

In this paper, we focus on the crystallization of the quotient spaces $C(SO_q(2n+1)/SO_q(2n-1))$, 
denoted by $C(SO_0(2n+1)/SO_0(2n-1))$, using an alternative approach. Our treatment differs 
from that of Matassa and Yuncken, aligning instead with the framework established by Giri and Pal. 
We consider the standard generators $\{x_{i,q}\}_{i=1}^{2n+1}$, which comprise the elements of the 
last row of the fundamental (vector) matrix representation of $SO_q(2n+1)$. Specifically, we study 
the Soibelman representation $\psi_q$ of the quantized coordinate ring $C(SO_q(2n+1)/SO_q(2n-1))$ 
acting on a fixed, $q$-independent Hilbert space.
A result due to Soibelman ensures that the operator families $\psi_q(x_{i,q})$ converge in the norm 
topology as $q \to 0^+$ to limiting operators, which we denote by $z_i$. We define the crystallization 
of $C(SO_q(2n+1)/SO_q(2n-1))$ to be the $C^*$-algebra generated by these limiting operators 
$\{z_i\}_{i=1}^{2n+1}$. By employing the Faddeev--Reshetikhin--Takhtajan (FRT) approach, one explicitly 
obtains the commutation relations satisfied by the generators $\{x_{i,q}\}_{i=1}^{2n+1}$. By taking the 
limit as $q \to 0^+$ appropriately, we obtain a new set of relations obeyed by the crystallized 
generators $\{z_i\}_{i=1}^{2n+1}$.
We then define the universal $C^*$-algebra $\mathcal{B}_{2n+1}$ with $2n+1$ generators satisfying these 
limiting relations. By analyzing these relations, we determine all of its irreducible representations. 
Using these representations, we show that $\mathcal{B}_{2n+1}$ is isomorphic to $C(SO_0(2n+1)/SO_0(2n-1))$, 
establishing the latter as a real noncommutative variety. As an important consequence, we prove that all 
irreducible representations of $C(SO_0(2n+1)/SO_0(2n-1))$ arise exactly as $q \to 0^+$ limits of irreducible 
representations of $C(SO_q(2n+1)/SO_q(2n-1))$. 
Using this setup, we construct a faithful representation of $C(SO_0(2n+1)/SO_0(2n-1))$ other 
than the one obtained from its definition. This enables us to obtain a chain of short exact sequences of 
$C^*$-algebras having $C(SO_0(2n+1)/SO_0(2n-1))$ as the middle term. Using this framework, we compute the 
$K$-groups with explicit generators. Using the theory of homogeneous $C^*$-extensions, we prove that 
$C(SO_0(2n+1)/SO_0(2n-1))$ is isomorphic to the $C^*$-algebra $\Sigma^{2(n-1)} \Sigma_2^2 \Sigma^{2(n-1)} C(\mathbb{T})$, 
and hence to $C(SO_q(2n+1)/SO_q(2n-1))$ for every $q \in (0,1)$ (see \cite{Sau-2026aa}).

Several natural questions arise from these results. For instance, can these notions be generalized to the homogeneous spaces $C(SO_q(2n)/SO_q(2n-2))$, and do the approaches of Matassa and Yuncken yield isomorphic crystallizations? Furthermore, since the defining relations of the crystal limit $C(SO_0(2n+1)/SO_0(2n-1))$ yield a canonical $\mathbb{T}^{2n+1}$-action, it is natural to ask whether one can construct a finitely summable, $K$-homologically nontrivial spectral triple that is equivariant under this action. Constructing such spectral triples should be significantly more tractable than in the generic $q$-deformed setting, as the corresponding generators reduce to partial isometries.  We shall address these investigations in subsequent articles.

In the next section, we begin with an overview of the quantum group $SO_q\left(\mathcal{N}\right)$ using the FRT approach (\cite{KliSch-1997aa}), and then recall the quotient space $SO_q(2n+1)/SO_q(2n-1)$. We then define its crystallization, $SO_0(2n+1)/SO_0(2n-1)$. In Section $3$, we find all irreducible representations of $C(SO_0(2n+1)/SO_0(2n-1))$ and establish that it is given by a finite set of generators and relations. We further discuss its primitive ideal space. In the final section, we prove that the $C^*$-algebras of $SO_q(2n+1)/SO_q(2n-1)$ and its crystallization are isomorphic, extending the main result of \cite{Sau-2026aa} to the interval $[0,1)$ by including $q=0$ case.

\textbf{Notations:}
Let $\mathbb{T}$ denote the unit circle in the complex plane, and let $q$ be a real number lying in the interval $(0,1)$. Let  $\mathbb{N}_{0}=\mathbb{N}\cup \{0\}$. The standard bases of the Hilbert spaces $\ell^2(\mathbb{N}_0)$ and
$\ell^2(\mathbb{Z})$ will be denoted by $\{e_m : m \in \mathbb{N}_0\}$ and $\{e_m : m \in \mathbb{Z}\}$ respectively. The length of a Weyl word $w$ is denoted by $\ell(w)$. The number operator $e_m\mapsto me_m$ is denoted by $N$. The operator $S$ denotes the left shift  $e_m\mapsto e_{m-1}$. The $C^{\ast}$-subalgebra of $\cll(\ell^2(\mathbb{N}_0))$ generated by $S$ is denoted by $\scrt$. 
Let $\sigma: \scrt\longrightarrow\mathbb{C} $
be the homomorphism for which $\sigma(S)=1$. For a Hilbert space $H$, the algebra of compact operators on $H$ will be denoted by $\mathcal{K}(H)$, or simply $\mathcal{K}$ when the Hilbert space is clear from the context. For $i,j\in \mathbb{N}_0$, the operator $p_{ij}$ denotes the operator on $\ell^2(\mathbb{N}_0)$ which sends the basis element $e_i$ to $e_j$. 

\section{Preliminaries}
We recall the necessary background and define the crystallization of $SO_q(2n+1)/SO_q(2n-1)$. We then construct a family of irreducible representations of the crystallized $C^*$-algebra.
	
\subsection{The $C^*$-algebra $C(SO_q(\mathcal{N}))$}
Let $\mathcal{N}\geq 3$. We briefly recall the notation and basic relations for the standard Hopf $*$-algebra $\mathcal{O}\left(SO_q\left(\mathcal{N}\right)\right)$, with fundamental matrix $V=(v_j^i)_{i,j=1}^{\mathcal{N}}$, and its universal $C^*$-completion $C\left(SO_q\left(\mathcal{N}\right)\right)$. We follow the conventions of \cite{BhuBisSau-2026aa} and \cite[Section 9.3.3]{KliSch-1997aa}.

Put $i'=\mathcal{N}+1-i$, and define $\rho_i=\mathcal{N}/2-i$ for $i<i'$ and $\rho_{i'}=-\rho_i$ for $i\leq i'$. With $\delta_{ij}$ denoting the Kronecker delta, set $C_j^i=\delta_{ij'}q^{-\rho_i}$ and
$$
R_{mn}^{ij}=
\begin{cases}
(q-q^{-1})
\bigl(\delta_{jm}\delta_{in}-C_i^j C_n^m\bigr),
& i>m,\\[2pt]
q^{\delta_{ij}-\delta_{ij'}}
\delta_{im}\delta_{jn},
& i\leq m.
\end{cases}
$$
The generators $\left\{v_j^i\right\}_{i,j=1}^{\mathcal{N}}$ satisfy
\begin{equation}\label{Relation 2.1}
\sum_{k,l=1}^{\mathcal{N}}
R_{kl}^{ji}v_s^k v_t^l
=
\sum_{k,l=1}^{\mathcal{N}}
R_{st}^{lk}v_k^i v_l^j,
\qquad 1\leq i,j,s,t\leq\mathcal{N},
\end{equation}
and
\begin{equation}\label{Relation 2.2}
VV^*=V^*V=I.
\end{equation}
The involution and comultiplication are determined by
$$
\left(v_j^i\right)^*
=
q^{\rho_i-\rho_j}v_{j'}^{i'},
\qquad
\Delta\left(v_j^i\right)
=
\sum_{k=1}^{\mathcal{N}}v_k^i\otimes v_j^k.
$$
The pair $\left(C\left(SO_q\left(\mathcal{N}\right)\right),\Delta\right)$ is the standard $q$-deformation of $SO\left(\mathcal{N}\right)$. We use the conventions
$C(SO_q(1))=\mathbb{C}$ and $C(SO_q(2))=C(\mathbb{T})$.

\subsection{Crystallization of the Quantum Stiefel Manifold $C\left(SO_q\left(2n+1\right)/SO_q\left(2n-1\right)\right)$}

Let $\left(u_j^i\right)_{1 \leq i,j \leq \mathcal{N}-2}$ denote the generating matrix of
$C\left(SO_q(\mathcal{N}-2)\right)$. Define the $^*$-epimorphism 
$$\eta_{\mathcal{N}} : C\left(SO_q\left(\mathcal{N}\right)\right)
\longrightarrow C\left(SO_q\left(\mathcal{N}-2\right)\right)$$ 
 by
$$
\eta_{\mathcal{N}}\left(v_j^i\right)=
\begin{cases}
u_{j-1}^{i-1}, & 2 \leq i,j \leq \mathcal{N}-1, \\[4pt]
\delta_{ij}, & \text{otherwise}.
\end{cases}
$$

\begin{dfn}\rm(\cite[Section~3]{BhuBisSau-2026aa})
Let $\mathcal{N} \geq 3$. The quantum Stiefel manifold
$SO_q\left(\mathcal{N}\right)/SO_q\left(\mathcal{N}-2\right)$ is defined by
$$
C\left(SO_q\left(\mathcal{N}\right)/SO_q\left(\mathcal{N}-2\right)\right)
=
\left\{
a \in C\left(SO_q\left(\mathcal{N}\right)\right) :
\left(\eta_{\mathcal{N}} \otimes \mathrm{id}\right)\Delta(a)
= I \otimes a
\right\}.
$$
\end{dfn}
The following theorem gives a concrete description of
$C\left(SO_q\left(\mathcal{N}\right)/SO_q\left(\mathcal{N}-2\right)\right)$ in terms of the generators of $C\left(SO_q\left(\mathcal{N}\right)\right)$.

\begin{thm} \rm(\cite[Theorem 3.5]{BhuBisSau-2026aa}) Let $\mathcal{N} \geq 3$. 
Then the  $C^*$-algebra underlying the quotient space
$SO_q\left(\mathcal{N}\right)/SO_q\left(\mathcal{N}-2\right)$
is the $C^*$-subalgebra of  $C\left(SO_q(\mathcal{N})\right)$ generated by
$\left\{v_i^{\mathcal{N}} : 1 \leq i \leq \mathcal{N} \right\}$.
\end{thm}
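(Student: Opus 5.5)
We need to show two inclusions. Write $A$ for $C\left(SO_q(\mathcal{N})/SO_q(\mathcal{N}-2)\right)$ and $B$ for the $C^*$-subalgebra generated by $\{v_i^{\mathcal{N}}\}$.

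\emph{Inclusion $B\subseteq A$.} Since $(\eta_{\mathcal{N}}\otimes\mathrm{id})\Delta$ is a $*$-homomorphism, $A$ is a $C^*$-subalgebra. It therefore suffices to check the generators. We have
$$(\eta_{\mathcal{N}}\otimes\mathrm{id})\Delta(v_i^{\mathcal{N}})=\sum_k \eta_{\mathcal{N}}(v_k^{\mathcal{N}})\otimes v_i^k = I\otimes v_i^{\mathcal{N}},$$
because $\eta_{\mathcal{N}}(v_k^{\mathcal{N}})=\delta_{k\mathcal{N}}$: the row index $\mathcal{N}$ lies outside the range $2,\dots,\mathcal{N}-1$.

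\emph{Inclusion $A\subseteq B$.} This is the substantive direction, and I would use Peter--Weyl theory. Let $E=(\mathrm{id}\otimes h)\circ(\eta_{\mathcal{N}}\otimes\mathrm{id})\Delta$ with the roles arranged so that $E$ is the conditional expectation onto $A$. Concretely, $E(a)=(h_{\mathcal{N}-2}\circ\eta_{\mathcal{N}}\otimes\mathrm{id})\Delta(a)$, where $h_{\mathcal{N}-2}$ is the Haar state of $SO_q(\mathcal{N}-2)$. The map $E$ is contractive and idempotent, has range $A$, and maps $\mathcal{O}(SO_q(\mathcal{N}))$ onto the dense subalgebra $\mathcal{O}(A)$ of $A$. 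Hence it suffices to show that $\mathcal{O}(A)$, the space of $SO_q(\mathcal{N}-2)$-invariant polynomial elements, lies in the $*$-algebra generated by the last row.

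Decompose $\mathcal{O}(SO_q(\mathcal{N}))=\bigoplus_\lambda \mathrm{span}\{u^\lambda_{ij}\}$. The invariant part is spanned by the matrix coefficients $u^\lambda_{ij}$ whose row index $i$ runs over an orthonormal basis of the $SO_q(\mathcal{N}-2)$-fixed vectors in $V_\lambda$. By the classical branching rule, which remains valid for $q$ generic via the $U_q$-restriction, the $SO(\mathcal{N}-2)$-fixed vectors are counted by the spherical pair data. One then compares dimensions: the $*$-algebra generated by the $v_i^{\mathcal{N}}$ is a left comodule subalgebra, and its degree-$\le m$ part surjects onto the invariant part of the corresponding filtration. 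I would verify this by showing that the commutation relations \eqref{Relation 2.1} and \eqref{Relation 2.2} specialized to the last row yield a PBW-type spanning set whose graded dimensions match the classical $\dim \mathcal{O}(SO(\mathcal{N})/SO(\mathcal{N}-2))_{\le m}$.

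\emph{Main obstacle.} I expect the hardest step to be the dimension matching, that is, showing that the multiplicity of fixed vectors equals the size of the last-row algebra in each isotypic component. A cleaner alternative I would try first is induction on $\mathcal{N}$. Embed $SO_q(\mathcal{N}-2)\subset SO_q(\mathcal{N}-1)\subset SO_q(\mathcal{N})$ and use the known result that the quantum sphere $SO_q(\mathcal{N})/SO_q(\mathcal{N}-1)$ is generated by the last row. Then show that the fixed vectors of $SO_q(\mathcal{N}-2)$ in each $V_\lambda$ arise from highest-weight-type vectors built from products of $v^{\mathcal{N}}_i$ and their adjoints, using the fact that $(v^{\mathcal{N}}_i)^*=q^{\rho_{\mathcal{N}}-\rho_i}v^1_{i'}$ brings in the first row. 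Both routes require this multiplicity comparison, so it remains the key step.
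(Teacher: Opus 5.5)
Since the paper only quotes this result from the cited reference, I am judging your argument on its own merits. Your inclusion $B\subseteq A$ is correct. So is your reduction of $A\subseteq B$, via $E=(h_{\mathcal N-2}\circ\eta_{\mathcal N}\otimes\mathrm{id})\Delta$, to showing that every $SO_q(\mathcal N-2)$-invariant element of $\mathcal O(SO_q(\mathcal N))$ lies in the $*$-algebra $\mathcal O(B)$ generated by the last row. But that step is the entire content of the theorem, and you leave it open. Worse, the mechanism you propose points the wrong way. A PBW-type spanning set extracted from the commutation relations bounds $\dim\mathcal O(B)_{\le m}$ from \emph{above}. That gives nothing, since $\mathcal O(B)\subseteq\mathcal O(A)$ is already known. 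You need a bound from \emph{below}: enough linearly independent elements of $\mathcal O(B)$ in each isotypic component, and the relations alone never give linear independence. Note also that $(SO(\mathcal N),SO(\mathcal N-2))$ is not a spherical pair. For $\mathcal N\ge 5$, branching through $SO(\mathcal N-1)$ shows that $SO(\mathcal N-2)$-fixed vectors occur only in $V_\lambda$ with $\lambda=(\lambda_1,\lambda_2,0,\ldots,0)$, with multiplicity $\lambda_1-\lambda_2+1$; that is the number you must reach. One way to reach it: $\mathcal O(B)$ is a right coideal subalgebra, because $\Delta(v^{\mathcal N}_i)=\sum_k v^{\mathcal N}_k\otimes v^k_i$ and $(v^{\mathcal N}_i)^*=q^{\rho_{\mathcal N}-\rho_i}v^1_{i'}$. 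Hence $\mathcal O(B)$ is the direct sum of its intersections with the spans $C(\lambda)\cong V_\lambda^*\otimes V_\lambda$ of matrix coefficients. Each intersection has the form $W_\lambda\otimes V_\lambda$ with $W_\lambda$ inside the fixed vectors. It therefore suffices to find $\lambda_1-\lambda_2+1$ linearly independent highest weight vectors of weight $\lambda$ (for the right regular action) inside $\mathcal O(B)$. The elements $(v^{\mathcal N}_1)^i(v^1_1)^jD^{\lambda_2}$ with $i+j=\lambda_1-\lambda_2$ are candidates. Here $D=c_1v^1_1v^{\mathcal N}_2+c_2v^1_2v^{\mathcal N}_1$ is the image of the highest weight vector of weight $\epsilon_1+\epsilon_2$ in $V\otimes V$ under $e_j\otimes e_k\mapsto v^1_jv^{\mathcal N}_k$. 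Highest weight vectors form a subalgebra. These elements also have pairwise distinct weights $(j-i)\epsilon_1$ for the left action of the maximal torus $v^r_s\mapsto\delta_{rs}t_r$, so they are linearly independent once you check that they are nonzero. You must also justify that the quantum branching multiplicities equal the classical ones, i.e.\ that $\eta_{\mathcal N}$ is dual to the Hopf subalgebra of $U_q(\mathfrak{so}_{\mathcal N})$ generated by the simple root vectors other than those of $\alpha_1$. The cases $\mathcal N=3,4$ need separate bookkeeping.

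Your alternative route should be dropped. There is no intermediate quantum subgroup $SO_q(\mathcal N-1)$ available. For $\mathcal N\ge 4$ the classical $\eta_{\mathcal N}$-subgroup fixes exactly $\mathrm{span}\{e_1,e_{\mathcal N}\}$, so any $SO(\mathcal N-1)$ containing it is the stabilizer of a real unit vector in that span. That stabilizer is not a coordinate subgroup and cannot be cut out by deleting indices the way $\eta_{\mathcal N}$ is; the quantum analogues of $\mathfrak{so}_{\mathcal N-1}\subset\mathfrak{so}_{\mathcal N}$ are coideal subalgebras, not Hopf subalgebras. The ``known result'' you want to invoke is also misidentified. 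Classically, the last row together with its adjoint (up to scalars, the first row) is a complex vector $x$ with $\sum_i|x_i|^2=1$ and $\sum_i x_ix_{i'}=0$, i.e.\ an orthonormal $2$-frame. So it generates the Stiefel manifold, not a sphere. For $\mathcal N=2n+1$ the quantum sphere is generated by the middle row $\{v^{n+1}_j\}_j$, which is not $\eta_{\mathcal N}$-invariant. That route is therefore either circular or unavailable.
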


From now on, we restrict ourselves to the case where $\mathcal{N}$ is odd, and write $\mathcal{N}=2n+1$. 
From  \cite[Corollaries~3.8]{BhuBisSau-2026aa}, we get  a faithful representation of
$C\left(SO_q\left(2n+1\right)/SO_q\left(2n-1\right)\right)$ on the Hilbert space $\ell^2\left(\mathbb{Z}\right) \otimes
\ell^2\left(\mathbb{N}_0\right)^{\otimes (2n-1)}$. We denote this representation by $\psi_{q,n}$, or simply by $\psi_q$ when $n$ is clear from the context.

For $i \in \left\{1,2,\ldots,2n+1\right\}$, define $x_i = v_{i'}^{2n+1}$. When it is necessary to emphasize the dependence on $q$, we write $x_{i,q}$ for $x_i$. From the diagram representation of the faithful representation of $C\left(SO_q\left(2n+1\right)/SO_q\left(2n-1\right)\right)$ given in \cite{BhuSau-2023aa}, one can get the explicit form of each $\psi_q\left(x_{i,q}\right)$, and using that, it is easy to check that  
\begin{enumerate}
	\item $\psi_{q,n}(C\left(SO_q\left(2n+1\right)/SO_q\left(2n-1\right)\right))$ is a $C^*$-subalgebra of $C\left(\mathbb{T}\right)\otimes \mathscr{T}^{\otimes (2n-1)}$.
	\item Moreover,  $\lim_{q \to 0^+} \psi_q\left(x_{i,q}\right)$ exists in $C\left(\mathbb{T}\right)\otimes \mathscr{T}^{\otimes (2n-1)}$.
\end{enumerate}
For $1\leq i \leq 2n+1$, we define 
$$z_i := \lim_{q \to 0^+} \psi_q\left(x_{i,q}\right).$$ 
 Motivated by \cite{GirPal-2024aa}, we then introduce the following notion.

\begin{dfn}
The $C^*$-algebra underlying the crystallization of the quantum Stiefel manifold
$SO_q\left(2n+1\right)/SO_q\left(2n-1\right)$
is defined as the $C^*$-subalgebra of $C\left(\mathbb{T}\right)\otimes \mathscr{T}^{\otimes (2n-1)}$ generated by the elements $z_1, z_2, \ldots, z_{2n+1}$.  It is denoted by $C\left(SO_0\left(2n+1\right)/SO_0\left(2n-1\right)\right)$.
\end{dfn}

In what follows, we describe a family of irreducible representations of the $C^*$-algebra 
$C\big(SO_0(2n+1)/SO_0(2n-1)\big)$. 
These are obtained from irreducible representations of its $q$-analogs 
by letting $q$ approach $0$.
For that, let $\rho : \mathscr{T} \to C\left(\mathbb{T}\right)$ be the surjective $^*$-homomorphism
sending the generator $S$ to the function $\mathbf{t}$. For each $t\in\mathbb{T}$, let $\mathrm{ev}_t : C\left(\mathbb{T}\right) \to \mathbb{C}$ denote the evaluation homomorphism defined by $\mathrm{ev}_t(f)=f(t)$ for
$f\in C\left(\mathbb{T}\right)$, and define the map
$\sigma_t : \mathscr{T} \to \mathbb{C}$ by $\sigma_t := \mathrm{ev}_t \circ \rho$. Observe that $\sigma_t(S)=t$. We write $\sigma$ for the map $\sigma_1$. 

For $k \in \mathbb{N}$, let $\sigma_{k-1}^{k-1}$ denote the identity homomorphism on
$C(\mathbb{T}) \otimes \mathscr{T}^{\otimes (k-1)}$. Define the $^*$-homomorphism
$$\sigma_{k-1}^k : C\left(\mathbb{T}\right)\otimes \mathscr{T}^{\otimes k}
\to C\left(\mathbb{T}\right)\otimes \mathscr{T}^{\otimes (k-1)};\,\,  \quad \sigma_{k-1}^k
:= 1 \otimes 1^{\otimes (k-1)} \otimes \sigma.$$ 
For $m,k\in\mathbb{N}_0$ with $m < k$, we define 
$$\sigma_m^k
= \sigma_m^{m+1} \circ \sigma_{m+1}^{m+2} \circ \cdots \circ \sigma_{k-1}^k.$$ 
Moreover, for $t\in\mathbb{T}$ and for $m,k\in\mathbb{N}_0$ with $m\leq k$, we define 
$$\sigma_{(t,m)}^k
= \left(\mathrm{ev}_t \otimes 1^{\otimes m}\right) \circ \sigma_m^k.$$ 
Using the natural embedding
$
\mathscr{T}^{\otimes m}
\subseteq
\mathcal{L}\left(\ell^2(\mathbb{N}_0)^{\otimes m}\right)
$
and  \cite[Theorem 3.6]{BhuBisSau-2026aa} and \cite[Theorem 2.2]{NesTus-2012aa}, it follows that, for each $t\in\mathbb{T}$ and $0\leq m\leq 2n-1$, the representation
$$
\sigma_{(t,m)}^{2n-1}\circ\psi_q
\colon C\left(SO_q(2n+1)/SO_q(2n-1)\right)
\longrightarrow
\mathcal{L}\left(\ell^2(\mathbb{N}_0)^{\otimes m}\right)
$$
is irreducible. Moreover, every irreducible representation of this $C^*$-algebra is unitarily equivalent to one of these representations.
When $n$ is fixed and $0\leq m\leq 2n-1$, we simplify the notation by writing
$$
\sigma_{t,m}
:=
\sigma_{(t,m)}^{2n-1},
\quad \text{and} \quad
\sigma_{t,m}^{(q)}
:=
\sigma_{(t,m)}^{2n-1}\circ\psi_q.
$$

\brmrk \label{Indexing of the irreducible representations}
For each $t\in\mathbb{T}$ and $0\leq m\leq 2n-1$, and via the natural embedding
$\mathscr{T}^{\otimes m} \subseteq \mathcal{L}\left(\ell^2\left(\mathbb{N}_0\right)^{\otimes m}\right)$, the irreducible representation
$\sigma_{t,m}^{(q)}$ coincides with the representation appearing in the classification of irreducible representations of $C(SO_q(2n+1)/SO_q(2n-1))$ in \cite{BhuBisSau-2026aa}, namely, $\sigma_{t,m}^{(q)}
= \phi_{t,\omega_{m+1}}$.
Using the diagram representations for $\phi_{t,\omega_{m+1}}$ (see \cite{BhuSau-2023aa}), we obtain:
\begin{itemize}
\item If $0\leq m\leq n-1$, then
$$\sigma_{t,m}^{(q)}(x_r)=0
\quad \text{for } r>m+1,\quad \text{and} \quad \sigma_{t,m}^{(q)}\left(x_{m+1}\right)
= (-1)^m\, t\, (q^{2N})^{\otimes m}.$$

\item If $n-1 < m \leq 2n-1$, then
$$\sigma_{t,m}^{(q)}\left(x_r\right)=0
\quad \text{for } r>m+2, \quad \text{and} \quad \sigma_{t,m}^{(q)}\left(x_{m+2}\right)
= (-1)^{n-1}\, t\, (q^{2N})^{\otimes m}.$$
\end{itemize}
Consequently, letting $q\to 0^+$ in the above expressions, we obtain the following.

\begin{itemize}
\item If $0\leq m\leq n-1$, then 
$$\sigma_{t,m}\left(z_r\right)=0 \quad \text{for } r>m+1, \quad \text{and} \quad \sigma_{t,m}\left(z_{m+1}\right) = (-1)^m tp^{\otimes m}.$$

\item If $n-1 < m \leq 2n-1$, then 
$$\sigma_{t,m}\left(z_r\right)=0 \quad \text{for } r>m+2, \quad \text{and} \quad \sigma_{t,m}\left(z_{m+2}\right) = (-1)^{n-1}\, tp^{\otimes m}.$$
\end{itemize}
\ermrk 

\begin{lmma}\label{Inclusion of compacts}
Let $t\in\mathbb{T}$, and let $0\leq m\leq 2n-1$. Then
$$\mathcal{K}\left(\ell^2\left(\mathbb{N}_0\right)^{\otimes m}\right) \subset \sigma_{t,m}\left(C\left(SO_0\left(2n+1\right)/SO_0\left(2n-1\right)\right)\right).$$
\end{lmma}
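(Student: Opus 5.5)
The plan is to exploit the rank-one projection that Remark~\ref{Indexing of the irreducible representations} already places in the image, and then upgrade it to all compact operators by a cyclicity argument. Write $\mathcal{A}_{t,m}:=\sigma_{t,m}\bigl(C(SO_0(2n+1)/SO_0(2n-1))\bigr)\subseteq \mathcal{L}(\ell^2(\mathbb{N}_0)^{\otimes m})$ and $\xi_0:=e_0\otimes\cdots\otimes e_0$. Here $p=\lim_{q\to0^+}q^{2N}=p_{00}$, the projection onto $\mathbb{C}e_0$. By the Remark, $\sigma_{t,m}(z_{m+1})$ (for $m\le n-1$) or $\sigma_{t,m}(z_{m+2})$ (for $m>n-1$) equals $\pm t\,p^{\otimes m}$. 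Since $|t|=1$, this shows that the rank-one projection $P:=p^{\otimes m}$ onto $\mathbb{C}\xi_0$ lies in $\mathcal{A}_{t,m}$. The case $m=0$ is trivial, since then the Hilbert space is $\mathbb{C}$ and the image contains the scalar $\pm t\neq 0$.

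The general principle is standard. If a $C^*$-algebra $\mathcal{A}\subseteq\mathcal{L}(H)$ contains the rank-one projection $P=\xi_0\xi_0^*$ and $\xi_0$ is cyclic for $\mathcal{A}$, then $\mathcal{A}\supseteq\mathcal{K}(H)$. Indeed, for $a,b\in\mathcal{A}$ we have $aPb^*=(a\xi_0)(b\xi_0)^*\in\mathcal{A}$. These rank-one operators are dense among all rank-one operators because $\mathcal{A}\xi_0$ is dense in $H$. Taking norm-closed linear spans then gives all of $\mathcal{K}(H)$. Equivalently, one could show that $\mathcal{A}_{t,m}$ acts irreducibly and then invoke the fact that an irreducible $C^*$-algebra containing a nonzero compact operator contains all compacts. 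Note that irreducibility of $\sigma^{(q)}_{t,m}$ for $q>0$ does not pass automatically to the limit $q\to0^+$, so some direct argument is needed in either case.

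It therefore remains to show that $\xi_0$ is cyclic, i.e.\ that every basis vector $e_{k_1}\otimes\cdots\otimes e_{k_m}$ lies in the closed span of $\mathcal{A}_{t,m}\xi_0$. For this I will write down the explicit limiting operators $\sigma_{t,m}(z_r)$ for $r\le m+1$ (respectively $r\le m+2$). These are obtained from the diagram description of $\phi_{t,\omega_{m+1}}$ in \cite{BhuSau-2023aa} by letting $q\to0^+$: in each elementary tensor factor, $q^{N}\to p$, $\sqrt{1-q^{2N}}\,S\to S$, and $S^*\sqrt{1-q^{2N}}\to S^*$. The result is that each $\sigma_{t,m}(z_r)$ is a finite sum of elementary tensors in $S$, $S^*$, $p$, $1$, and $t$.

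The key step is an induction on the tensor position. I will look for words in these operators, and in products with $P$, that implement $e_0\mapsto e_{k}$ in a single tensor slot $j$ while acting as $p$ or as a fixed shift in the other slots. I expect the natural candidates to be $\sigma_{t,m}(z_{r})^*$ or $\sigma_{t,m}(z_{r})$ for $r$ adjacent to the index of the projection, since these typically carry a factor $S^*$ in exactly one new slot and a factor $p$ in the later slots. Applying powers of such operators successively to $\xi_0$ should produce, up to nonzero scalars, every $e_{k_1}\otimes\cdots\otimes e_{k_m}$.

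The main obstacle is this last step: extracting the explicit $q\to0$ form of the generators and checking that the surviving terms do not cancel and really reach all multi-indices. This is delicate for $m>n-1$, where the middle generator $z_{n+1}$ behaves differently and one slot receives contributions from two paths of the diagram. If the direct word construction becomes unwieldy, I would instead argue inductively via $\sigma_{t,m-1}=\sigma_{t,m}$ followed by $\sigma$ in the last slot. Using $P$-compressions one obtains operators of the form $x\otimes p$ with $x$ ranging over $\mathcal{K}(\ell^2(\mathbb{N}_0)^{\otimes(m-1)})$, and one then needs only one additional generator to shift in the $m$-th slot.
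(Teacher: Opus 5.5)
Your skeleton matches the paper's. You put the rank-one projection $p^{\otimes m}$ into $B_{t,m}:=\sigma_{t,m}\bigl(C(SO_0(2n+1)/SO_0(2n-1))\bigr)$; using $\pm\bar t\,\sigma_{t,m}(z_{m+1})$, resp.\ $\pm\bar t\,\sigma_{t,m}(z_{m+2})$, is even more direct than the paper's $z_j^*z_j$. You then want to show that $\xi_0=e_0^{\otimes m}$ is cyclic for $B_{t,m}$ and conclude from $aPb^*=(a\xi_0)(b\xi_0)^*$.

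The gap is that cyclicity of $\xi_0$ for the \emph{limit} operators is the entire substance of the lemma, and your proposal leaves it at ``I expect the natural candidates to be\ldots''. Nothing is actually proved there. The missing idea is the one the paper uses. By \cite[Lemma 4.3]{BhuSau-2023aa}, for each $q\in(0,1)$ and each $(j_1,\ldots,j_m)$ there is a noncommutative polynomial $P^{(q)}_{j_1,\ldots,j_m}$ in the $\sigma^{(q)}_{t,m}(x_{i,q})$ and their adjoints with $P^{(q)}_{j_1,\ldots,j_m}\xi_0=e_{j_1}\otimes\cdots\otimes e_{j_m}$ \emph{exactly}. You are right that irreducibility, or mere density of the orbit of $\xi_0$, does not survive $q\to0^+$. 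But exact vector identities of this kind do survive, because $\sigma^{(q)}_{t,m}(x_{i,q})\to\sigma_{t,m}(z_i)$ in norm. This needs the coefficients of $P^{(q)}_{j_1,\ldots,j_m}$ to stay bounded, which one reads off from the explicit weighted shifts. The limit polynomial $P_{j_1,\ldots,j_m}$ in the $\sigma_{t,m}(z_i)$ then satisfies $P_{j_1,\ldots,j_m}\,p^{\otimes m}=p_{j_10}\otimes\cdots\otimes p_{j_m0}$. So all matrix units lie in $B_{t,m}$, and no explicit computation of the limiting generators is needed.

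If you do pursue the direct computation, two corrections are needed.

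\textbf{Shift convention.} The paper's $S$ is the left shift. With that convention, $\sqrt{1-q^{2N}}\,S\to(1-p)S$ and $S^*\sqrt{1-q^{2N}}\to S^*(1-p)$. It is $S\sqrt{1-q^{2N}}$ and $\sqrt{1-q^{2N}}\,S^*$ that converge to $S$ and $S^*$. The distinction matters precisely for cyclicity, since $(1-p)S$ kills $e_1$.

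\textbf{Fallback.} Your fallback does not work as described. Compressing by $P=p^{\otimes m}$ only yields multiples of $\xi_0\xi_0^*$, and the projection $1^{\otimes(m-1)}\otimes p$ is not known to lie in $B_{t,m}$. Knowing $(1^{\otimes(m-1)}\otimes\sigma)(b)=x$ gives no control over the corner $(1\otimes p)\,b\,(1\otimes p)$. For example, $b=1^{\otimes(m-1)}\otimes S$ has image $1$ but zero corner. So the induction hypothesis on $\sigma_{t,m-1}$ does not produce $x\otimes p$.
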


\begin{proof}
We choose $1\leq j\leq 2n+1$ such that
$$
\sigma_{t,m}^{(q)}\left(x_{j,q}\right)\neq 0
\quad\text{and}\quad
\sigma_{t,m}^{(q)}\left(x_{r,q}\right)=0
\ \text{for all}\ j<r\leq 2n+1.
$$
By Remark \ref{Indexing of the irreducible representations}, $\sigma_{t,m}^{(q)}\left(x_{j,q}^*x_{j,q}\right)
= \left(q^{4N}\right)^{\otimes m}$. Taking the limit as $q\to 0^+$, we obtain $\sigma_{t,m}\left(z_j^*z_j\right)
=
p^{\otimes m}$. Thus
$$
p^{\otimes m}
\in
B_{t,m}
:=
\sigma_{t,m}\left(C\left(SO_0\left(2n+1\right)/SO_0\left(2n-1\right)\right)\right).
$$

By \cite[Lemma 4.3]{BhuSau-2023aa}, for each
$j_1,\ldots,j_m\in\mathbb{N}_0$, there exists a polynomial
$P_{j_1,\ldots,j_m}^{(q)}$ in the non-commuting variables
$\sigma_{t,m}^{(q)}\left(x_{1,q}\right),\ldots,
\sigma_{t,m}^{(q)}\left(x_{2n+1,q}\right)$ and their adjoints such that
$$
P_{j_1,\ldots,j_m}^{(q)}\left(e_0^{\otimes m}\right)
=
e_{j_1}\otimes\cdots\otimes e_{j_m},
$$
that is,
$$
P_{j_1,\ldots,j_m}^{(q)}\circ p^{\otimes m}
=
p_{j_1 0}\otimes\cdots\otimes p_{j_m 0}.
$$

Taking the limit as $q\to 0^+$, we obtain a polynomial
$P_{j_1,\ldots,j_m}$ in the variables
$\sigma_{t,m}\left(z_1\right),\ldots$,
$\sigma_{t,m}\left(z_{2n+1}\right)$
and their adjoints such that
$$
P_{j_1,\ldots,j_m}\circ p^{\otimes m}
=
p_{j_1 0}\otimes\cdots\otimes p_{j_m 0}.
$$

Since $P_{j_1,\ldots,j_m}$ and $p^{\otimes m}$ belong to $B_{t,m}$, it follows that $p_{j_1 0}\otimes\cdots\otimes p_{j_m 0}
\in
B_{t,m}$. Hence, for each
$i_1,\ldots,i_m,j_1,\ldots,j_m\in\mathbb{N}_0$, we have $p_{j_1 i_1}\otimes\cdots\otimes p_{j_m i_m}
\in
B_{t,m}$. Therefore, $\mathcal{K}\left(\ell^2\left(\mathbb{N}_0\right)^{\otimes m}\right)
\subset
B_{t,m}$.
\end{proof}

From now on, we use the same notation $\sigma_{t,m}$ for its restriction to $C\left(SO_0(2n+1)/SO_0(2n-1)\right)$ whenever the domain $C\left(SO_0(2n+1)/SO_0(2n-1)\right)$ is clear from the context.

\begin{ppsn}\label{Natural irr. repn}
Let $t\in\mathbb{T}$ and let $0\leq m \leq 2n-1$. Then the restriction
$$
\sigma_{t,m} : C\left(SO_0\left(2n+1\right)/SO_0\left(2n-1\right)\right) \longrightarrow \mathcal{L}\left(\ell^2\left(\mathbb{N}_0\right)^{\otimes m}\right)
$$
is an irreducible representation.
\end{ppsn}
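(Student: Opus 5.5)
The plan is to deduce irreducibility directly from Lemma \ref{Inclusion of compacts}. That lemma says the image
$$B_{t,m}=\sigma_{t,m}\bigl(C(SO_0(2n+1)/SO_0(2n-1))\bigr)$$
contains $\mathcal{K}\bigl(\ell^2(\mathbb{N}_0)^{\otimes m}\bigr)$. A $C^*$-subalgebra of $\mathcal{L}(H)$ that contains $\mathcal{K}(H)$ acts irreducibly on $H$. Its commutant lies inside the commutant of $\mathcal{K}(H)$, and that commutant is $\mathbb{C}I$ because the rank-one operators already separate everything. Equivalently, any nonzero closed invariant subspace is invariant under all rank-one operators $\xi\mapsto\langle\xi,\eta\rangle\zeta$, and hence is all of $H$.

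First I would check that $\sigma_{t,m}$ restricted to $C(SO_0(2n+1)/SO_0(2n-1))$ is a well-defined nonzero $*$-representation on $\ell^2(\mathbb{N}_0)^{\otimes m}$.
\begin{itemize}
\item It is a $*$-homomorphism because it is the composition of the $*$-homomorphisms $\sigma^{k}_{k-1}$ and $\mathrm{ev}_t$, restricted to the $C^*$-subalgebra generated by $z_1,\ldots,z_{2n+1}$ inside $C(\mathbb{T})\otimes\mathscr{T}^{\otimes(2n-1)}$.
\item The target $\mathscr{T}^{\otimes m}$ sits inside $\mathcal{L}(\ell^2(\mathbb{N}_0)^{\otimes m})$ via the natural embedding.
\item The representation is nondegenerate, since the algebra is unital and the unit maps to the identity. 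Nondegeneracy is in any case automatic once the image contains the compacts.
\end{itemize}

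Second, I would take $T\in\mathcal{L}(\ell^2(\mathbb{N}_0)^{\otimes m})$ commuting with $B_{t,m}$. By the lemma, $T$ commutes with every matrix unit $p_{j_1i_1}\otimes\cdots\otimes p_{j_mi_m}$. Commuting with the rank-one projections $p_{jj}$ (in multi-index form) forces $T$ to be diagonal. Commuting with the off-diagonal units $p_{ji}$ then forces all diagonal entries to be equal, so $T$ is a scalar. By Schur's lemma this gives irreducibility.

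The case $m=0$ needs a separate remark. There the Hilbert space is $\mathbb{C}$, and $\sigma_{t,0}$ is a character, which is nonzero since it is unital. It is therefore trivially irreducible.

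There is no real obstacle here, since all the analytic content is in Lemma \ref{Inclusion of compacts}. The only point needing care is that the inclusion of compacts has been established for the restriction to the crystallized algebra itself, not merely for some larger algebra. That is exactly what the lemma states: the limiting polynomials $P_{j_1,\ldots,j_m}$ are polynomials in the $\sigma_{t,m}(z_i)$ and their adjoints.
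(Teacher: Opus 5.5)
Your proposal is correct and follows essentially the same route as the paper. Both deduce irreducibility from Lemma~\ref{Inclusion of compacts}: anything commuting with the image must commute with all compact operators and hence be trivial. The paper phrases this via a nontrivial commuting projection $E$; you phrase it via the full commutant being $\mathbb{C}I$, and your extra remarks on well-definedness and the case $m=0$ are harmless additions.
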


\begin{proof}
Suppose, to the contrary, that $\sigma_{t,m}$ is reducible. Then there exists a nontrivial projection $E\in\mathcal{L}\left(\ell^2\left(\mathbb{N}_0\right)^{\otimes m}\right)$ such that
$E\sigma_{t,m}(z) = \sigma_{t,m}(z)E$ for each $z\in C\left(SO_0\left(2n+1\right)/SO_0\left(2n-1\right)\right)$. By Lemma \ref{Inclusion of compacts}, it then follows that $EK = KE$ for all $K\in \mathcal{K}\left(\ell^2\left(\mathbb{N}_0\right)^{\otimes m}\right)$, which implies that $E = 0$ or $E = 1$, contradicting the nontriviality of $E$. Therefore, our assumption was false, and $\sigma_{t,m}$ is irreducible.
\end{proof}

\brmrk
Observe that for each $t\in\mathbb{T}$,  $0\leq m \leq 2n-1$, and $1\leq i\leq 2n+1$, we have
\[
\sigma_{t,m} \left(z_i\right) = \lim_{q\to 0^+} \sigma_{t,m}^{(q)} \left(x_{i,q}\right).
\]
Therefore, by Proposition \ref{Natural irr. repn}, each irreducible representation
$$\sigma_{t,m}: C\left(SO_0\left(2n+1\right)/SO_0\left(2n-1\right)\right)\rightarrow \mathcal{L}\left(\ell^2\left(\mathbb{N}_0\right)^{\otimes m}\right)$$ arises as the $q \rightarrow 0^+$ limit of the irreducible representations $\sigma_{t,m}^{(q)} : C\left(SO_q\left(2n+1\right)/SO_q\left(2n-1\right)\right)\rightarrow \mathcal{L}\left(\ell^2\left(\mathbb{N}_0\right)^{\otimes m}\right)$.
It is then natural to ask whether the collection
$\sigma_{t,m}$, $t\in\mathbb{T}$, $0\leq m\leq 2n-1$,
exhausts, up to unitary equivalence, all irreducible representations of
$C\left(SO_0\left(2n+1\right)/SO_0\left(2n-1\right)\right)$. This will be answered affirmatively in Corollary \ref{Characterization of all irr. repn odd}.
\ermrk

\section{Irreducible representations of $C\left(SO_0(2n+1)/SO_0(2n-1)\right)$}

In this section, we derive relations for the generators of $C(SO_q(2n+1)/SO_q(2n-1))$ and obtain their limiting form as $q\to0^+$. We then show that $C(SO_0(2n+1)/SO_0(2n-1))$ is the universal $C^*$-algebra determined by these limiting relations. We also describe the primitive ideal spaces of certain quotient algebras $\mathcal{B}_{2n+1}^{k}$ associated with a natural filtration of this algebra by ideals.
We begin with the following relations, derived as in \cite[Proposition~14(ii)]{KliSch-1997aa}.

\begin{ppsn}\label{Odd q relation}
In the $C^*$-algebra
$$
C\left(SO_q(2n+1)/SO_q(2n-1)\right) 
= C^*\left(\left\{x_i : 1\leq i \leq 2n+1\right\}\right),
$$
the generators $\left\{x_i : 1 \leq i \leq 2n+1\right\}$ satisfy the following relations:
\begin{align}
x_i x_j&=q x_j x_i & \text{for } & i > j,  i+j \neq 2n+2 \label{c1}\\
x_i x_{i'}  &= q^2 x_{i'} x_i - (1-q^2)\sum_{k > i} q^{i-k} x_k x_{k'} & \text{for } &  i > n+1 \label{c2}\\
x_i^* x_{i'} &= q^2 x_{i'} x_i^* & \text{for } & i \neq n+1 \label{c3}\\
x_i^* x_j &= q x_j x_i^* & \text{for }  & i+j>2n+2,  i \neq j \label{c4}\\
x_i^* x_j &= q x_j x_i^* - (1-q^2)q^{\rho_j + \rho_i} x_{i'}x_{j'}^* 
& \text{for }  & i+j<2n+2,  i \neq j \label{c5}\\
x_i^* x_i &= q^{\delta_{ii'}} x_i x_i^* + (1-q^2)\sum_{k > i} x_k x_k^* & \text{ for } & i \geq n+1 \label{c6}\\
x_i^* x_i &= x_i x_i^* - (1 -q^2)q^{2\rho_i} x_{i'} x_{i'}^* + (1-q^2)\sum_{k > i} x_k x_k^* & \text{for } & i < n+1 \label{c7}\\
\sum_{i=1}^{2n+1}&x_i x_i^* =1 \label{c8}
\end{align}
\end{ppsn}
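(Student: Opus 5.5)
The plan is to specialise the FRT relations \eqref{Relation 2.1} together with the unitarity relations \eqref{Relation 2.2} to the last row $v^{2n+1}_{\cdot}$, and then rewrite them in the variables $x_i=v^{2n+1}_{i'}$ and their adjoints via $(v^i_j)^*=q^{\rho_i-\rho_j}v^{i'}_{j'}$. This follows the pattern of \cite[Proposition~14(ii)]{KliSch-1997aa}.

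\textbf{Commutation relations among the $x_i$.} For \eqref{c1} and \eqref{c2}, take $i=j=2n+1$ in \eqref{Relation 2.1}. Since $R^{ji}_{kl}$ with $i=j=\mathcal{N}$ is nonzero only for $k=l=\mathcal{N}$, the left side collapses to $q\,v^{\mathcal N}_s v^{\mathcal N}_t$. The right side is
\[
\sum_{k,l}R^{lk}_{st}v^{\mathcal N}_k v^{\mathcal N}_l .
\]
If $s\neq t'$, the matrix $R$ only involves the term $\delta_{jm}\delta_{in}$ together with the diagonal term. This gives $x_ix_j=qx_jx_i$ for the appropriate ordering, which is \eqref{c1}. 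If $s=t'$, the $C^j_iC^m_n$ term contributes the sum over $k$. Carefully tracking the factors $q^{-\rho}$ then produces the correction $-(1-q^2)\sum_{k>i}q^{i-k}x_kx_{k'}$ in \eqref{c2}. The main bookkeeping here is the translation $i\mapsto i'$ between the indices of $v$ and of $x$.

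\textbf{Mixed relations.} For \eqref{c3}, \eqref{c4} and \eqref{c5}, the adjoint $x_i^*$ is proportional to $v^{1}_{i}$ (the first row). So I would apply \eqref{Relation 2.1} with $(i,j)=(1,2n+1)$, or with $(2n+1,1)$. The required $R$-entries are now those with index pairs $(1,\mathcal N)$, and these are exactly where the $C$-terms (since $1'=\mathcal N$) appear. The cases $i+j>2n+2$, $=2n+2$ and $<2n+2$ correspond to whether the column indices $s,t$ satisfy $s+t$ less than, equal to, or greater than $\mathcal N+1$. This determines whether the correction term $x_{i'}x_{j'}^*$ survives. Converting back with the $\rho$-factors gives the coefficient $q^{\rho_i+\rho_j}$.

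\textbf{Diagonal relations.} For \eqref{c6} and \eqref{c7}, the case $s=t'$ of the same relation gives $x_i^*x_i$ in terms of $x_ix_i^*$ plus a sum. That sum has to be rearranged using the already-established relations and the quadratic identity $\sum_k v^{\mathcal N}_k (v^{\mathcal N}_k)^*=1$. The latter is the last-row diagonal entry of $VV^*=I$, and it gives \eqref{c8} directly. The distinction between $i\geq n+1$ and $i<n+1$ comes from whether the $i'$ term lies in the range $k>i$ of the sum, and for $i=n+1$ from $\delta_{ii'}=1$.

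I expect the main obstacle to be the case analysis for \eqref{c5}–\eqref{c7}. There several $R$-terms contribute simultaneously, and the $q$-powers from $C^i_j$, from the involution and from the $\rho$'s must combine exactly into the stated coefficients. I would first check everything against $n=1$ ($SO_q(3)$, three generators), and then do the general index computation.
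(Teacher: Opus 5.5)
Your plan is the paper's proof: $i=j=2n+1$ in \eqref{Relation 2.1} for \eqref{c1}--\eqref{c2}, the choice $(i,j)=(2n+1,1)$ combined with $x_s^*\propto v^1_s$ for \eqref{c3}--\eqref{c7}, and the $(2n+1,2n+1)$ entry of $VV^*=I$ for \eqref{c8}; the other choice $(1,2n+1)$ would bring in the products $v^k_sv^{k'}_t$ from all rows. Two bookkeeping slips need fixing: since $x_i^*x_j\propto v^1_iv^{2n+1}_{j'}$, comparing $i+j$ with $2n+2$ amounts to comparing the column indices $i$ and $j'$ (equality gives \eqref{c3}, while column indices summing to $2n+2$ is the diagonal case $x_i^*x_i$), and in \eqref{c6}--\eqref{c7} the sum $\sum_{k>i}x_kx_k^*$ comes out directly from the $C^j_iC^m_n$ term of $R$, so no rearrangement via unitarity is needed.
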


\begin{proof}
In relation \eqref{Relation 2.1}, taking $i=j=2n+1$, we obtain
\begin{equation}\label{General product relation 1}
qx_s x_t
=
\sum_{k,l=1}^{2n+1} R_{s't'}^{l'k'} x_k x_l, \qquad 1\leq s,t \leq 2n+1.
\end{equation}

Similarly, rewriting \eqref{Relation 2.1} as 
\begin{align*}
\sum_{k,l \geq 1} R_{kl}^{ji} q^{\rho_k - \rho_s} \left(v_{s'}^{k'}\right)^* v_{t'}^l &= \sum_{k,l \geq 1} R_{st'}^{lk} q^{\rho_j - \rho_l} v_k^i \left(v_{l'}^{j'}\right)^*,
\end{align*}

and choosing $i=2n+1$, $j=1$, we obtain
\begin{equation}\label{General product relation 2}
x_s^* x_t = q^{1+\rho_s} \sum_{k,l =1}^{2n+1} R_{st'}^{lk'} q^{- \rho_l} x_k x_l^*, \qquad 1\leq s,t \leq 2n+1.
\end{equation}

We now derive each relation.

\begin{enumerate}[label=(\arabic*)]
\item Let $s>t$ and $s+t\neq 2n+2$. Then $t\neq s'$, so $C_{t'}^{s'}=0$, and
$$
R_{s't'}^{l'k'}
=
\begin{cases}
\left(q-q^{-1}\right)\delta_{ks}\delta_{lt}, & \text{if } l<s,\\
q^{\delta_{lk}-\delta_{l'k}}\delta_{ls}\delta_{kt}, & \text{if } l\geq s.
\end{cases}
$$

Substituting into \eqref{General product relation 1} gives 
\begin{align*}
q x_s x_t
&=
\sum_{\substack{k,l=1\\ l<s}}^{2n+1}
\left(q-q^{-1}\right)\delta_{ks}\delta_{lt} x_k x_l
+
\sum_{k=1}^{2n+1}
q^{\delta_{sk}-\delta_{s'k}}\delta_{kt} x_k x_s\\
&=
\sum_{k=1}^{2n+1}
\left(q-q^{-1}\right)\delta_{ks} x_k x_t
+
q^{\delta_{st}-\delta_{s't}} x_t x_s\\
&=
\left(q-q^{-1}\right)x_s x_t + x_t x_s.
\end{align*}
Hence $x_s x_t = q x_t x_s$, proving \eqref{c1}.

\item We rewrite equation \ref{General product relation 1} in the form 
$$qx_sx_t = \sum_{k,l=1}^{2n+1} R_{s't'}^{lk'} x_k x_{l'}.$$
Let $s=t'$ and assume $t> n+1$. If $l>t$, then $l' < t' < n+1 < t$, hence $\delta_{l't} =0$. Therefore,
$$R_{tt'}^{lk'} = \begin{cases}
-\left(q - q^{-1}\right)\delta_{kl} q^{-\left(\rho_{k'} + \rho_t\right)}, &\text{if } l>t,\\
q^{\delta_{lk'} - \delta_{lk}} \delta_{lt} \delta_{kt}, &\text{if } l\leq t.
\end{cases}$$
Substituting into the above rewritten form of equation \ref{General product relation 1},
\begin{align*}
q x_{t'} x_t &= -\left(q-q^{-1}\right)\sum_{\substack{k,l=1 \\ l>t}}^{2n+1} \delta_{kl} q^{\rho_{t'} -\rho_{k'}} x_k x_{l'} + \sum_{\substack{k,l =1 \\ l\leq t}}^{2n+1} q^{\delta_{lk'} - \delta_{lk}} \delta_{lt} \delta_{kt} x_k x_{l'}\\
&= -\left(q-q^{-1}\right)\sum_{l>t} q^{\rho_{t'} - \rho_{l'}} x_l x_{l'} + q^{-1} x_t x_{t'}.
\end{align*}
Since
$$
\rho_{t'} - \rho_{l'} = \left(\frac{2n+1}{2} - t'\right) - \left(\frac{2n+1}{2} - l'\right) = t-l \quad \text{for all} \quad l>t,$$
we obtain
$$q x_{t'} x_t = q^{-1}\left(1-q^2\right)\sum_{l>t} q^{t-l} x_l x_{l'} + q^{-1} x_t x_{t'}.$$
Multiplying both sides by $q$, we finally obtain
$$x_t x_{t'} = q^2 x_{t'}x_t - \left(1-q^2\right)\sum_{l>t} q^{t-l} x_l x_{l'},$$
proving \eqref{c2}.

\item Let $t=s'$ with $s\neq n+1$. Since $s\neq s'$, we have $\delta_{ss'} =0$. Therefore,
$$R_{ss}^{lk'} = \begin{cases}
0, &\text{if } l>s,\\
q^{\delta_{lk'} - \delta_{lk}} \delta_{ls} \delta_{k's}, &\text{if } l\leq s.
\end{cases}$$
Substituting into \eqref{General product relation 2},
$$x_s^* x_{s'} = q^{1+\rho_s}\sum_{\substack{k,l=1 \\ l\leq s}}^{2n+1} q^{\delta_{lk'} - \delta_{lk}} \delta_{ls} \delta_{k's} q^{-\rho_l} x_k x_l^* = q^{1+\rho_s}\left(q^{\delta_{ss} - \delta_{ss'} -\rho_s} x_{s'} x_s^*\right) = q^2 x_{s'} x_s^*,$$
proving \eqref{c3}.

\item Assume $s+t> 2n+2$ and $s\neq t$. Then $C_{t'}^s = 0$, and if $l>s$, then $\delta_{lt'} = 0$. Thus
$$R_{st'}^{lk'}= \begin{cases}
0, &\text{if } l>s,\\
q^{\delta_{lk'} - \delta_{lk}} \delta_{ls}\delta_{kt}, &\text{if } l\leq s.
\end{cases}$$
Substituting into \eqref{General product relation 2},
$$x_s^* x_t = q^{1+\rho_s}\left(q^{\delta_{st'} - \delta_{st}} q^{-\rho_s} x_t x_s^*\right) = qx_t x_s^*,$$
proving \eqref{c4}. 

\item Assume $s+t < 2n+2$ and $s\neq t$. Then $C_{t'}^s =0$, and therefore 
$$R_{st'}^{lk'} = \begin{cases}
\left(q- q^{-1}\right) \delta_{k's}\delta_{lt'}, &\text{if } l>s,\\
q^{\delta_{lk'} - \delta_{lk}} \delta_{ls} \delta_{kt}, &\text{if } l\leq s.
\end{cases}$$
Substituting into equation \eqref{General product relation 2} yields
\begin{align*}
x_s^* x_t &= q^{1+\rho_s}\left(\sum_{\substack{k,l =1 \\ l>s}}^{2n+1}\left(q-q^{-1}\right)q^{-\rho_l} \delta_{k's} \delta_{lt'} x_k x_l^* + \sum_{\substack{k,l =1 \\ l\leq s}}^{2n+1} q^{\delta_{lk'} - \delta_{lk}} q^{-\rho_l}\delta_{ls} \delta_{kt} x_k x_l^*\right)\\
&= q^{1+\rho_s}\left(\left(q-q^{-1}\right) q^{-\rho_{t'}} x_{s'} x_{t'}^* + q^{\delta_{st'} - \delta_{st}} q^{-\rho_s} x_t x_s^*\right)\\
&= q x_t x_s^* - \left(1-q^2\right) q^{\rho_t +\rho_s} x_{s'} x_{t'}^*,
\end{align*}
proving \eqref{c5}.

\item Let $t=s\geq n+1$. Then, for each $l>s$, $\delta_{ls'} = 0$. Therefore
$$R_{ss'}^{lk'} = \begin{cases}
-\left(q-q^{-1}\right) \delta_{k'l'} q^{-\left(\rho_{k'} + \rho_s\right)}, &\text{if } l>s,\\
q^{\delta_{lk'} - \delta_{lk}} \delta_{ls} \delta_{ks}, &\text{if } l\leq s.
\end{cases}$$
Substituting into \eqref{General product relation 2} gives
\begin{align*}
x_s^* x_s &= q^{1+\rho_s}\left(-\left(q-q^{-1}\right)\sum_{\substack{k,l=1 \\ l>s}}^{2n+1} \delta_{kl} q^{-\left(\rho_{k'} + \rho_s\right)} q^{-\rho_l} x_k x_l^* + \sum_{\substack{k,l=1 \\ l\leq s}}^{2n+1} q^{\delta_{lk'} - \delta_{lk}} \delta_{ls} \delta_{ks} q^{-\rho_l} x_k x_l^*\right)\\
&= q^{1+\rho_s}\left(-\left(q-q^{-1}\right)\sum_{l>s} q^{-\left(\rho_{l'} + \rho_s\right) -\rho_l} x_l x_l^* + q^{\delta_{ss'} - 1 - \rho_s} x_sx_s^*\right)\\
&= q^{\delta_{ss'}} x_s x_s^* + \left(1-q^2\right) \sum_{l>s} x_l x_l^*,
\end{align*}
proving \eqref{c6}.

\item Let $t=s< n+1$. Then
$$R_{ss'}^{lk'} = \begin{cases}
\left(q-q^{-1}\right)\left(\delta_{k's}\delta_{ls'} - \delta_{kl} q^{-\rho_{k'} - \rho_s}\right), &\text{if } l>s,\\
q^{\delta_{lk'} - \delta_{lk}} \delta_{ls} \delta_{ks}, &\text{if } l\leq s.
\end{cases}$$
Substituting into \eqref{General product relation 2} gives
\begin{align*}
x_s^* x_s &= q^{1+\rho_s}\left(\left(q-q^{-1}\right)\left(\sum_{\substack{k,l=1 \\ l>s}}^{2n+1}\delta_{k's} \delta_{ls'} q^{-\rho_l} x_k x_l^* - \sum_{\substack{k,l=1 \\ l>s}}^{2n+1}\delta_{kl} q^{-\rho_{k'} - \rho_s} q^{-\rho_l} x_k x_l^*\right) + q^{\delta_{ss'} - 1} q^{-\rho_s} x_s x_s^*\right)\\
&= q^{1+\rho_s}\left(\left(q-q^{-1}\right)\left(q^{-\rho_{s'}} x_{s'} x_{s'}^* - \sum_{l>s} q^{-\rho_{l'} - \rho_s - \rho_l} x_l x_l^*\right) + q^{-1-\rho_s} x_s x_s^*\right)\\
&= x_s x_s^* - \left(1-q^2\right) q^{2\rho_s} x_{s'} x_{s'}^* + \left(1-q^2\right)\sum_{l>s} x_l x_l^*,
\end{align*}
proving \eqref{c7}.

\item The $(2n+1, 2n+1)$ entry of $VV^*$ equals $\sum_{i=1}^{2n+1} x_i x_i^*$. Since $VV^* = I$, we obtain $\sum_{i=1}^{2n+1} x_i x_i^* =1$, proving \eqref{c8}. 
\end{enumerate}
\end{proof} 

The elements $z_i=\lim_{q\to 0^+}\psi_q\left(x_{i,q}\right)$ for $i=1,2,\ldots, 2n+1$ satisfy the following relations obtained from \eqref{c1}--\eqref{c8} after taking the limit $q\to 0^+$:
\begin{align}
	z_i z_j &= 0, & & i>j, \label{r1.0}\\
	z_i^* z_j &= 0, & & i\neq j, \label{r2.0}\\
	z_i^* z_i &= z_i z_i^* + \sum_{k>i} z_k z_k^*, & & \forall\, i\neq n+1, \label{r3.0}\\
	z_{n+1}^* z_{n+1} &= \sum_{k> n+1} z_k z_k^*, \label{r4.0}\\
	\sum_{i=1}^{2n+1} z_i z_i^* &= 1. \label{r5.0}
\end{align}
We denote this system of relations by $\mathcal{R}_{2n+1}$. For $1\leq k \leq 2n$, if we additionally impose the constraints $z_{2n+1}=z_{2n}=\dots=z_{k+1}=0$, we denote the resulting set of relations by $\mathcal{R}^k_{2n+1}$. We also set $\mathcal{R}^{2n+1}_{2n+1}=\mathcal{R}_{2n+1}$.

\begin{dfn}
	For $1\leq k \leq 2n+1$, let $\mathcal{B}^k_{2n+1}$ denote the universal $C^*$-algebra generated by the elements $z_1, z_2, \ldots, z_{2n+1}$ subject to the relations $\mathcal{R}^k_{2n+1}$. We write $\mathcal{B}^{2n+1}_{2n+1}$ simply as $\mathcal{B}_{2n+1}$.
\end{dfn}
\brmrk \label{rem1}
By Remark \ref{Indexing of the irreducible representations}, the images of the generators of $C(SO_0(2n+1)/SO_0(2n-1))$ under the representation $\sigma_{t,m}$ satisfy the relations $\mathcal{R}_{2n+1}^{m+1}$ for $0\leq m\leq n-1$, and the relations $\mathcal{R}_{2n+1}^{m+2}$ for $n-1<m\leq 2n-1$. Moreover, relation \eqref{r4.0} gives $\mathcal{R}_{2n+1}^{n+1}=\mathcal{R}_{2n+1}^{n}$. Finally, relation~\eqref{r5.0} implies that every $^*$-representation $\pi$ satisfies $\|\pi\left(z_i\right)\|\leq 1$ for $1\leq i\leq 2n+1$. Hence, the universal $C^*$-algebra $\mathcal{B}_{2n+1}^k$ exists for each $1\leq k\leq 2n+1$. By the universal properties, for each $1\leq k\leq2n+1$
there is a canonical isomorphism
$$
\mathcal{B}_{2n+1}^k
\cong
\mathcal{B}_{2n+1}/
\left\langle z_{k+1},z_{k+2},\ldots,z_{2n+1}\right\rangle.
$$
Here $\langle\cdot\rangle$ denotes the closed two-sided
ideal generated by the indicated elements, and the ideal
is understood to be $\{0\}$ when $k=2n+1$.
\ermrk

\brmrk \label{rem2}
Note that if $1 \leq k \leq n$, then the defining relations $\mathcal{R}^k_{2n+1}$ of $\mathcal{B}^k_{2n+1}$ are exactly the same as those for $C(S_0^{2k-1})$. Hence, mapping the generators $z_i$ to the standard generators of $C(S_0^{2k-1})$ will induce a canonical isomorphism between the two.
\ermrk

We begin by analyzing the $C^*$-algebra $\mathcal{B}_{2n+1}$, and determining all of its irreducible representations. We must point out that many of these results hold analogously for the $C^*$-algebra $\mathcal{B}^k_{2n+1}$.

\begin{ppsn}\label{Extra relation for odd}
In the $C^*$-algebra $\mathcal{B}_{2n+1}$, we have $z_{n+1}^2=0$.
\end{ppsn}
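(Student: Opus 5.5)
The plan is to show directly that $(z_{n+1}^2)^*(z_{n+1}^2)=0$ in $\mathcal{B}_{2n+1}$, using only the relations \eqref{r2.0} and \eqref{r4.0} of $\mathcal{R}_{2n+1}$. The $C^*$-identity $\|x\|^2=\|x^*x\|$ then forces $z_{n+1}^2=0$.

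First I would write
$$
(z_{n+1}^2)^*(z_{n+1}^2)=z_{n+1}^*\,\bigl(z_{n+1}^*z_{n+1}\bigr)\,z_{n+1}.
$$
Next, I replace the middle factor using \eqref{r4.0}, namely $z_{n+1}^*z_{n+1}=\sum_{k>n+1}z_kz_k^*$. This gives
$$
(z_{n+1}^2)^*(z_{n+1}^2)=\sum_{k=n+2}^{2n+1} \bigl(z_{n+1}^*z_k\bigr)\bigl(z_k^*z_{n+1}\bigr).
$$
In each summand we have $k\neq n+1$. Relation \eqref{r2.0}, $z_i^*z_j=0$ for $i\neq j$, therefore gives $z_{n+1}^*z_k=0$, and by taking adjoints also $z_k^*z_{n+1}=0$. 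So every term vanishes, and $(z_{n+1}^2)^*z_{n+1}^2=0$.

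Finally, $\|z_{n+1}^2\|^2=\|(z_{n+1}^2)^*z_{n+1}^2\|=0$, so $z_{n+1}^2=0$ in $\mathcal{B}_{2n+1}$. The only point needing care is applying \eqref{r4.0} to the interior factor $z_{n+1}^*z_{n+1}$ rather than trying to commute $z_{n+1}$ past its adjoint. Once this is done, no genuine obstacle remains. The same computation works verbatim in each quotient $\mathcal{B}^k_{2n+1}$ with $k\geq n+1$.
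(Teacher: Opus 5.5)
Your proposal is correct and is essentially the paper's own proof. You rewrite $(z_{n+1}^2)^*z_{n+1}^2$ as $z_{n+1}^*\bigl(z_{n+1}^*z_{n+1}\bigr)z_{n+1}$, substitute relation \eqref{r4.0} for the middle factor, kill each summand $\bigl(z_{n+1}^*z_k\bigr)\bigl(z_k^*z_{n+1}\bigr)$ with \eqref{r2.0}, and conclude $z_{n+1}^2=0$ by the $C^*$-identity, exactly as the paper does.
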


\begin{proof}
We have
\begin{align*}
\left(z_{n+1}^2\right)^* z_{n+1}^2
&= z_{n+1}^*\left(z_{n+1}^* z_{n+1}\right)z_{n+1}\\
&= z_{n+1}^*\left(\sum_{k>n+1} z_k z_k^*\right)z_{n+1}
\tag*{(by relation \eqref{r4.0})}\\
&= \sum_{k>n+1} \left(z_{n+1}^* z_k\right)\left(z_k^* z_{n+1}\right)
= 0
\tag*{(by relation \eqref{r2.0}).}
\end{align*}
Therefore $z_{n+1}^2=0$.
\end{proof}

From relation \eqref{r3.0}, it follows  that $z_{2n+1}$ is normal. For simplicity, we denote $z_{2n+1}^* z_{2n+1}$ as $\omega$.

\begin{lmma}\label{Properties of omega for odd}
In the $C^*$-algebra $\mathcal{B}_{2n+1}$, the following relations hold:
\begin{enumerate}[label=(\arabic*)]
\item $\omega z_i = 0$ for $1 \leq i < 2n+1$,
\item $\sigma(\omega) \subseteq \left\{0,1\right\}$.
\end{enumerate}
\end{lmma}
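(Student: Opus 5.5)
The plan is to use only the defining relations $\mathcal{R}_{2n+1}$ together with the normality of $z_{2n+1}$ noted just before the statement. First observe that relation \eqref{r3.0} with $i=2n+1$ has an empty sum on the right, so $z_{2n+1}^*z_{2n+1}=z_{2n+1}z_{2n+1}^*$. Hence
\[
\omega = z_{2n+1}z_{2n+1}^* = z_{2n+1}^*z_{2n+1}.
\]
Having both expressions for $\omega$ available is the key point.

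For part (1), I fix $1\leq i<2n+1$ and use the form $\omega=z_{2n+1}z_{2n+1}^*$. This gives
\[
\omega z_i = z_{2n+1}\bigl(z_{2n+1}^* z_i\bigr).
\]
Since $i\neq 2n+1$, relation \eqref{r2.0} gives $z_{2n+1}^*z_i=0$, and so $\omega z_i=0$.

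For part (2), I aim to show that $\omega$ is a projection. It is evidently self-adjoint and positive. Relation \eqref{r5.0} together with normality yields
\[
\omega = z_{2n+1}z_{2n+1}^* = 1-\sum_{i=1}^{2n} z_i z_i^*.
\]
Multiplying on the left by $\omega$ and applying part (1) to each term $\omega z_i z_i^*$ gives
\[
\omega^2=\omega-\sum_{i=1}^{2n}(\omega z_i)z_i^*=\omega .
\]
So $\omega$ is a self-adjoint idempotent, that is, a projection, and therefore $\sigma(\omega)\subseteq\{0,1\}$.

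There is no real obstacle here. The only point needing care is to invoke normality of $z_{2n+1}$, so that one may switch freely between $z_{2n+1}^*z_{2n+1}$ and $z_{2n+1}z_{2n+1}^*$. The first form is the definition of $\omega$, the second is what makes \eqref{r2.0} applicable in (1), and the second is also what \eqref{r5.0} produces in (2).
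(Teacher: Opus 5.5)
Your proof is correct and is essentially the paper's argument. The only small difference is in (1): the paper writes $\omega z_i = z_{2n+1}^*\bigl(z_{2n+1}z_i\bigr)$ and applies \eqref{r1.0} directly, so normality of $z_{2n+1}$ is not needed there. In (2) the paper expands $\omega=\bigl(\sum_{i} z_iz_i^*\bigr)\omega$ and kills the cross terms with \eqref{r2.0}, which is just the adjoint of your computation.
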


\begin{proof}
\begin{enumerate}[label=(\arabic*)]
\item For $1 \leq i < 2n+1$, we compute $\omega z_i = z_{2n+1}^*\left(z_{2n+1} z_i\right) = 0$, where the last equality follows from relation \eqref{r1.0}.

\item Since $\omega$ is self-adjoint, it suffices to prove that $\omega$ is a projection, i.e., $\omega^2 = \omega$.  
By relation \eqref{r5.0}, we have
$$
\omega = \left(\sum_{i=1}^{2n+1} z_i z_i^*\right)\omega
       = \sum_{i=1}^{2n} z_i\left(z_i^* z_{2n+1}\right) z_{2n+1}^* + \omega^2.
$$
Using relation \eqref{r2.0}, the summation term vanishes, hence $\omega = \omega^2$. Therefore, $\sigma(\omega) \subseteq \left\{0,1\right\}$.
\end{enumerate}
\end{proof}

\begin{ppsn}\label{Image of omega for odd}
Let $\pi$ be a representation of $\mathcal{B}_{2n+1}$. Then:
\begin{enumerate}[label=(\arabic*)]
    \item\label{IO.Part1} The operator $\pi(\omega)$ is identity precisely on the subspace $\bigcap_{i=1}^{2n} \ker \pi\left(z_i^*\right)$.
    \item The following kernel inclusions are satisfied:
    \begin{itemize}
        \item For $i \in \{1, \dots, 2n+1\} \setminus \{n+1\}$, we have $\ker\pi\left(z_i\right) \subseteq \ker\pi\left(z_k^*\right)$ for all $k \geq i$.
        \item For $i = n+1$, we have $\ker\pi\left(z_i\right) \subseteq \ker\pi\left(z_k^*\right)$ for all $k > n+1$.
    \end{itemize}
    \item The operator $\pi\left(z_1\right)$ is an isometry. For $i \in \{2, \dots, 2n+1\}$, the restriction of $\pi\left(z_i\right)$ to a corresponding joint kernel is also an isometry:
    \begin{itemize}\label{IO.Part3}
        \item If $i \neq n+1$, the restriction $\pi\left(z_i\right)|_{\bigcap_{k=1}^{i-1} \ker\pi\left(z_k^*\right)}$ is an isometry.
        \item If $i = n+1$, the restriction $\pi\left(z_i\right)|_{\bigcap_{k=1}^i \ker\pi\left(z_k^*\right)}$ is an isometry.
    \end{itemize}
    \item\label{IO.Part4} Either $\sigma\left(\pi (\omega)\right) = \{0,1\}$ or $\sigma\left(\pi(\omega)\right) =\{0\}$.
\end{enumerate}
\end{ppsn}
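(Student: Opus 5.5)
Write $Z_i=\pi(z_i)$ and $W=\pi(\omega)=Z_{2n+1}^*Z_{2n+1}$. By Lemma \ref{Properties of omega for odd}, $W$ is a projection. All four parts follow from the relations $\mathcal{R}_{2n+1}$ by short positivity arguments; the main tool is that a sum of positive operators annihilates a vector only if each summand does.

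For part (1), first suppose $Z_i^*\xi=0$ for all $i\le 2n$. Relation \eqref{r5.0} then gives $\xi=Z_{2n+1}Z_{2n+1}^*\xi$. Since $Z_{2n+1}$ is normal (relation \eqref{r3.0} with $i=2n+1$), this equals $W\xi$. Conversely, suppose $W\xi=\xi$. For $i\le 2n$ we have $Z_i^*Z_{2n+1}=0$ by \eqref{r2.0}, so
$$Z_i^*\xi=Z_i^*Z_{2n+1}Z_{2n+1}^*\xi=0,$$
again using normality to write $W=Z_{2n+1}Z_{2n+1}^*$. Hence the range of $W$ is exactly $\bigcap_{i\le 2n}\ker Z_i^*$.

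For part (2), take $\xi\in\ker Z_i$ with $i\neq n+1$ and apply \eqref{r3.0}:
$$0=\langle Z_i^*Z_i\xi,\xi\rangle=\|Z_i^*\xi\|^2+\sum_{k>i}\|Z_k^*\xi\|^2 .$$
So $Z_k^*\xi=0$ for every $k\ge i$. When $i=n+1$, relation \eqref{r4.0} gives the same identity without the $\|Z_{n+1}^*\xi\|^2$ term, which yields only $k>n+1$.

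For part (3), first take $i=1$. Relation \eqref{r3.0} combined with \eqref{r5.0} gives $Z_1^*Z_1=\sum_k Z_kZ_k^*=1$, so $Z_1$ is an isometry. Now let $i\neq n+1$ and $\xi\in\bigcap_{k<i}\ker Z_k^*$. Then
$$\|Z_i\xi\|^2=\langle (Z_iZ_i^*+\sum_{k>i}Z_kZ_k^*)\xi,\xi\rangle=\langle \sum_{k}Z_kZ_k^*\xi,\xi\rangle=\|\xi\|^2,$$
where the second equality uses that the terms with $k<i$ vanish on $\xi$, and the third uses \eqref{r5.0}. For $i=n+1$ and $\xi\in\bigcap_{k\le n+1}\ker Z_k^*$, relation \eqref{r4.0} gives $\|Z_{n+1}\xi\|^2=\sum_{k>n+1}\|Z_k^*\xi\|^2=\|\xi\|^2$, using \eqref{r5.0} and the vanishing of the terms with $k\le n+1$. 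A norm-preserving restriction is an isometry on that subspace, which proves the claim.

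For part (4), which I expect to be the only step needing care, the obstacle is to exclude $\sigma(W)=\{1\}$, i.e.\ $W=1$. If $W=1$, then $\omega z_1=0$ from Lemma \ref{Properties of omega for odd}(1) gives $Z_1=0$. This contradicts part (3), since $Z_1$ is an isometry on a nonzero space (I assume $\pi$ is nondegenerate on a nonzero Hilbert space). Because $W$ is a projection, its spectrum is contained in $\{0,1\}$. It is nonempty and, by the above, not equal to $\{1\}$. So it is either $\{0\}$, or it contains $1$ and is then all of $\{0,1\}$ because $W\neq 1$.
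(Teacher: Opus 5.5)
Your proposal is correct and follows the paper's proof essentially step for step: the fixed-point description of $\pi(\omega)$ via \eqref{r5.0} and normality of $z_{2n+1}$, and the positivity arguments with \eqref{r3.0}, \eqref{r4.0}, \eqref{r5.0} for the kernel inclusions and the isometry statements. The only real deviation is in part (4), where you rule out $\pi(\omega)=I$ by noting it would force the isometry $\pi(z_1)$ to vanish, whereas the paper uses \eqref{r4.0} to obtain $I=\pi(z_{n+1})^*\pi(z_{n+1})=0$; both arguments are valid, and yours is slightly more direct.
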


\begin{proof}
\begin{enumerate}[label=(\arabic*)]
    \item If $h \in \bigcap_{i=1}^{2n} \ker \pi\left(z_i^*\right)$, then relation \eqref{r5.0} shows that
    $$
    \pi(\omega)h = \left(I - \sum_{i=1}^{2n} \pi\left(z_i\right)\pi\left(z_i^*\right)\right)h = h.
    $$
    Conversely, if $\pi(\omega)h = h$, the relation $z_i^*\omega = 0$ from Lemma \ref{Properties of omega for odd} implies that for each $i \in \{1, \dots, 2n\}$, $\pi\left(z_i^*\right)h = \pi\left(z_i^*\right)\pi(\omega)h = \pi\left(z_i^*\omega\right)h = 0$. Therefore, $h \in \bigcap_{i=1}^{2n} \ker \pi\left(z_i^*\right)$.
    
    \item First, consider the case where $i \in \{1, \dots, 2n+1\} \setminus \{n+1\}$. Let $h \in \ker\pi\left(z_i\right)$. Applying the representation $\pi$ to relation~\eqref{r3.0} and evaluating at $h$ gives
$$
\left(\pi\left(z_i z_i^*\right) + \sum_{k>i} \pi\left(z_k z_k^*\right)\right)h = \pi\left(z_i^*z_i\right)h = 0.
$$
Taking the inner product of the left-hand side with $h$ yields a sum of squared norms:
$$
\left\|\pi\left(z_i^*\right)h\right\|^2 + \sum_{k>i} \left\|\pi\left(z_k^*\right)h\right\|^2 = 0.
$$
Thus $\pi\left(z_k^*\right)h=0$ for every $k\geq i$,
proving the first inclusion.

For the case $i=n+1$, the inclusion follows by a similar argument using relation \eqref{r4.0}.

   \item For the case $i=1$, relation~\eqref{r3.0} states that $z_1^*z_1 = \sum_{k=1}^{2n+1} z_k z_k^*$. By relation~\eqref{r5.0}, this sum is equal to $1$. Applying the representation $\pi$ yields $\pi(z_1)^*\pi(z_1) = I$, which proves that $\pi(z_1)$ is an isometry.

Next, consider $i \in \{2, \dots, 2n+1\}$ with $i \neq n+1$. Let $W_{i-1} := \bigcap_{k=1}^{i-1} \ker\pi\left(z_k^*\right)$ and let $h \in W_{i-1}$. Since $\pi(z_k^*)h = 0$ for all $k < i$, we have $\pi\left(\sum_{k=1}^{i-1} z_k z_k^*\right)h = 0$. We can then compute:
\begin{align*}
    \pi\left(z_i\right)^*\pi\left(z_i\right)h
    &= \pi\left(\sum_{k=i}^{2n+1} z_k z_k^*\right)h && \left(\text{by relation \eqref{r3.0}}\right) \\
    &= \left( \pi\left(\sum_{k=1}^{i-1} z_k z_k^*\right) + \pi\left(\sum_{k=i}^{2n+1} z_k z_k^*\right) \right) h && \left(\text{since } \pi\left(\sum_{k=1}^{i-1} z_k z_k^*\right)h = 0\right) \\
    &= \pi\left(\sum_{k=1}^{2n+1} z_k z_k^*\right)h = h && \left(\text{by relation \eqref{r5.0}}\right).
\end{align*}
This shows that the restriction $\pi\left(z_i\right)|_{W_{i-1}}$ is an isometry.

Finally, the proof for the case $i=n+1$ is analogous, using relation \eqref{r4.0} in place of relation \eqref{r3.0}.

    \item Since $\omega$ is a projection, $\sigma\left(\pi(\omega)\right) \subseteq \{0, 1\}$. Assume for contradiction that $\sigma\left(\pi(\omega)\right) = \{1\}$. This would imply $\pi(\omega) = I$. From Lemma \ref{Properties of omega for odd}, the relation $\omega z_i = 0$ gives $\pi\left(z_i\right) = \pi(\omega)\pi\left(z_i\right) = 0$ for all $i < 2n+1$. This yields a contradiction via relation \eqref{r4.0}:
    $$
    I = \pi(\omega) = \pi\left(z_{n+1}^* z_{n+1}\right) = \pi\left(z_{n+1}\right)^*\pi\left(z_{n+1}\right) = 0.
    $$
    The assumption that $\sigma\left(\pi(\omega)\right) = \{1\}$ is therefore false, which establishes the claim.
\end{enumerate}
\end{proof}

\begin{ppsn}\label{Invariance for odd}
Let $u\in\bigcap_{i=1}^{2n} \ker \pi\left(z_i^*\right)$. Then 
$$
\pi\left(z_{2n+1}\right)u \in \bigcap_{i=1}^{2n} \ker \pi\left(z_i^*\right), \text{ and} \quad \pi\left(z_{2n+1}^*\right)u \in \bigcap_{i=1}^{2n} \ker \pi\left(z_i^*\right).
$$
\end{ppsn}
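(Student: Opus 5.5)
The quickest route is through Proposition \ref{Image of omega for odd}\ref{IO.Part1}. It identifies $W:=\bigcap_{i=1}^{2n}\ker\pi(z_i^*)$ with the range of the projection $\pi(\omega)$, that is, $W=\{h:\pi(\omega)h=h\}$. So it suffices to show that $\pi(\omega)$ commutes with $\pi(z_{2n+1})$ and with $\pi(z_{2n+1}^*)$. Then for $u\in W$ we get $\pi(\omega)\pi(z_{2n+1})u=\pi(z_{2n+1})\pi(\omega)u=\pi(z_{2n+1})u$, so $\pi(z_{2n+1})u\in W$, and the same computation works for the adjoint.

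The commutation is immediate from normality of $z_{2n+1}$. Relation \eqref{r3.0} with $i=2n+1$ gives $z_{2n+1}^*z_{2n+1}=z_{2n+1}z_{2n+1}^*$, since the sum over $k>2n+1$ is empty. Hence $\omega=z_{2n+1}^*z_{2n+1}$ lies in the commutative $C^*$-algebra generated by $z_{2n+1}$, and in particular $\omega z_{2n+1}=z_{2n+1}\omega$ and $\omega z_{2n+1}^*=z_{2n+1}^*\omega$.

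As a cross-check, and as an alternative that avoids part \ref{IO.Part1}, I would also verify the claim directly from the relations. For $i\le 2n$, relation \eqref{r2.0} gives $z_i^*z_{2n+1}=0$, so $\pi(z_i^*)\pi(z_{2n+1})u=0$ for every vector $u$, with no hypothesis on $u$ needed. For the adjoint, relation \eqref{r1.0} gives $z_{2n+1}z_i=0$ for $i<2n+1$, and taking adjoints yields $z_i^*z_{2n+1}^*=0$. Hence $\pi(z_i^*)\pi(z_{2n+1}^*)u=0$ as well. So both images lie in $W$ for arbitrary $u$, which is even stronger than the statement.

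There is no real obstacle here. The only point needing care is keeping the index conventions straight when taking adjoints of \eqref{r1.0}: the relation is $z_iz_j=0$ for $i>j$, applied with $i=2n+1$ and $j\le 2n$.
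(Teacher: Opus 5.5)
Your proposal is correct, and your ``cross-check'' paragraph is exactly the paper's proof. For $1\le i\le 2n$ the paper notes that $z_i^*z_{2n+1}=0$ by \eqref{r2.0} and that $z_i^*z_{2n+1}^*=(z_{2n+1}z_i)^*=0$ by \eqref{r1.0}.

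Your primary argument takes a genuinely different route. You identify $W$ with the range of the projection $\pi(\omega)$ via Proposition~\ref{Image of omega for odd}\ref{IO.Part1}. You then use normality of $z_{2n+1}$ (relation \eqref{r3.0} with $i=2n+1$) to see that $\pi(\omega)$ commutes with $\pi(z_{2n+1})$ and $\pi(z_{2n+1}^*)$, so $W$ is reducing. This is valid and not circular, since Proposition~\ref{Image of omega for odd} is established without the present statement.

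The trade-off is that this route is heavier. It rests on \eqref{r5.0}, Lemma~\ref{Properties of omega for odd} and normality, and it yields only invariance of $W$. The direct check uses just \eqref{r1.0} and \eqref{r2.0}. As you observe, it also gives the stronger fact that the entire ranges of $\pi(z_{2n+1})$ and $\pi(z_{2n+1}^*)$ lie in $W$, with no hypothesis on $u$.

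What your structural version offers is a conceptual explanation. $W$ is the range of a projection lying in the commutative algebra $C^*(z_{2n+1})$, so it reduces all of $\pi(C^*(z_{2n+1}))$. That is the form in which the invariance is used later, in Proposition~\ref{Joint kernel dimension for odd}. For the statement itself, the one-line relation check is the more economical proof.
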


\begin{proof}
Fix $i \in \{1,2,\ldots,2n\}$. By relation \eqref{r2.0}, $\pi\left(z_i^*\right)\pi\left(z_{2n+1}\right)u = 0$, and by relation \eqref{r1.0}, $\pi\left(z_i^*\right)\pi\left(z_{2n+1}^*\right)u = 0$. Therefore both $\pi\left(z_{2n+1}\right)u$ and $\pi\left(z_{2n+1}^*\right)u$ belong to $\bigcap_{i=1}^{2n} \ker \pi\left(z_i^*\right)$, as claimed.
\end{proof}

Let $K \subseteq \bigcap_{i=1}^{2n} \ker\pi(z_i^*)$ be a subspace invariant under the action of the $C^*$-algebra generated by $\pi(z_{2n+1})$. Define the index set
$$
\mathcal{I}_{2n} := \left\{\alpha = \left(\alpha_1, \dots, \alpha_{2n}\right) \in\mathbb{N}_0^{2n} : \alpha_{n+1}\leq 1\right\}.
$$
For each $u \in K$ and $\alpha \in \mathbb{N}_0^{2n}$, define the vector $u_\alpha$ by
$$
u_\alpha := \pi\left(z_1\right)^{\alpha_1}\pi\left(z_2\right)^{\alpha_2}\cdots \pi\left(z_{2n}\right)^{\alpha_{2n}}u.
$$

\begin{lmma}\label{Basis vectors norm for odd}
Let $\pi$ be a representation of $\mathcal{B}_{2n+1}$. Then, for each
$u \in \bigcap_{i=1}^{2n} \ker \pi\left(z_i^*\right)$ and
$\alpha \in \mathcal{I}_{2n}$, we have $\left\| u_{\alpha} \right\| = \|u\|$.
\end{lmma}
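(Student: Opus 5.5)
The plan is to build $u_\alpha$ from the right, one generator at a time, and check at every step that the current vector lies in a joint kernel on which the next operator acts isometrically, as in part \ref{IO.Part3} of Proposition \ref{Image of omega for odd}. For $0\leq j\leq 2n$ write $W_j:=\bigcap_{k=1}^{j}\ker\pi(z_k^*)$, with $W_0$ the whole Hilbert space. For $1\leq i\leq 2n+1$ set
$$
v_i:=\pi(z_i)^{\alpha_i}\pi(z_{i+1})^{\alpha_{i+1}}\cdots\pi(z_{2n})^{\alpha_{2n}}u,
$$
where $v_{2n+1}:=u$. Then $u_\alpha=v_1$, and it suffices to show by descending induction on $i$ that $\|v_i\|=\|u\|$ and $v_i\in W_{i-1}$.

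The base case $i=2n+1$ holds by hypothesis, since $u\in W_{2n}$.

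The key algebraic input is relation \eqref{r2.0}: $z_k^*z_i=0$ for $k\neq i$. Consequently, if $v\in W_j$ and $i\neq k$ for every $k\leq j$, then $\pi(z_i)v\in W_j$. It also gives $\pi(z_i)v\in W_{i-1}$ whenever $v\in W_{i-1}$. Hence, assuming $v_{i+1}\in W_i\subseteq W_{i-1}$, every vector $\pi(z_i)^r v_{i+1}$ with $r\geq 0$ stays in $W_{i-1}$.

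For $i\neq n+1$, part \ref{IO.Part3} of Proposition \ref{Image of omega for odd} says that $\pi(z_i)$ is isometric on $W_{i-1}$. Concretely, for $w\in W_{i-1}$, relations \eqref{r3.0} and \eqref{r5.0} give
$$
\|\pi(z_i)w\|^2=\langle \pi(z_i^*z_i)w,w\rangle=\|w\|^2 .
$$
Iterating $\alpha_i$ times yields $\|v_i\|=\|v_{i+1}\|=\|u\|$ and $v_i\in W_{i-1}$. For $i=1$ this is just the statement that $\pi(z_1)$ is an isometry.

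The index $i=n+1$ is the one delicate step, and it is exactly where the restriction $\alpha_{n+1}\leq 1$ enters. Here $\pi(z_{n+1})$ is only guaranteed to be isometric on $W_{n+1}$, not on $W_n$. I will therefore use the stronger information available at that stage. The vector $v_{n+2}$ is obtained from $u\in W_{2n}$ by applying only $z_j$ with $j\geq n+2$. By \eqref{r2.0}, each such application preserves membership in $W_{n+1}$, because $k\neq j$ for all $k\leq n+1$. Hence $v_{n+2}\in W_{n+1}$. If $\alpha_{n+1}=0$ there is nothing to prove. If $\alpha_{n+1}=1$, relation \eqref{r4.0} together with \eqref{r5.0} gives $\|\pi(z_{n+1})v_{n+2}\|=\|v_{n+2}\|$, and \eqref{r2.0} gives $v_{n+1}\in W_n$. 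A second application of $z_{n+1}$ would break this, since $z_{n+1}^2=0$ by Proposition \ref{Extra relation for odd}; that is why only $\alpha\in\mathcal{I}_{2n}$ is allowed. With this case checked, the induction runs down to $i=1$ and gives $\|u_\alpha\|=\|u\|$.
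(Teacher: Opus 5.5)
Your proposal is correct and is essentially the paper's argument. Both peel off the generators from the right. Relation \eqref{r2.0} keeps each intermediate vector in the joint kernel $\bigcap_{k<i}\ker\pi(z_k^*)$, or in $\bigcap_{k\le n+1}\ker\pi(z_k^*)$ at the index $n+1$, which is where $\alpha_{n+1}\le 1$ is used. Part (3) of Proposition \ref{Image of omega for odd} then gives the isometry at each step. Your descending induction with the invariant $v_i\in W_{i-1}$ packages the paper's stepwise norm claim, including its separate treatment of the last factor $\pi(z_{2n})^{\alpha_{2n}}$, more uniformly.
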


\begin{proof}
Fix $u \in \bigcap_{i=1}^{2n} \ker \pi\left(z_i^*\right)$ and
$\alpha \in \mathcal{I}_{2n}$. We first claim that for each $1 \leq i < 2n$,
$$\left\|
\pi\left(z_i\right)^{\alpha_i}
\pi\left(z_{i+1}\right)^{\alpha_{i+1}}
\cdots
\pi\left(z_{2n}\right)^{\alpha_{2n}}u
\right\|
=
\left\|
\pi\left(z_{i+1}\right)^{\alpha_{i+1}}
\cdots
\pi\left(z_{2n}\right)^{\alpha_{2n}}u
\right\|.$$

If $i \neq n+1$, then for any $\beta_i \in \mathbb{N}_0$, relation \eqref{r2.0} implies 
$$\pi\left(z_i\right)^{\beta_i}
\pi\left(z_{i+1}\right)^{\alpha_{i+1}}
\cdots
\pi\left(z_{2n}\right)^{\alpha_{2n}}u
\in
\bigcap_{k=1}^{i-1} \ker \pi\left(z_k^*\right).$$ Hence the claim follows by repeated application of Proposition \ref{Image of omega for odd}\ref{IO.Part3}.

If $i=n+1$, then $\alpha_i \in \{0,1\}$. The case $\alpha_i=0$ is immediate. If $\alpha_i=1$, then relation \eqref{r2.0} gives 
$$\pi\left(z_{i+1}\right)^{\alpha_{i+1}}
\cdots
\pi\left(z_{2n}\right)^{\alpha_{2n}}u
\in
\bigcap_{k=1}^{i} \ker \pi\left(z_k^*\right),$$
and the claim again follows from Proposition \ref{Image of omega for odd}\ref{IO.Part3}.

Iterating the above claimed equality yields $\|u_\alpha\|=\left\|\pi\left(z_{2n}\right)^{\alpha_{2n}}u\right\|$.

If $n=1$, then $\alpha_{2n}\in\{0,1\}$, and Proposition \ref{Image of omega for odd}\ref{IO.Part3} implies
$\left\|\pi\left(z_{2n}\right)^{\alpha_{2n}}u\right\|=\|u\|$.

If $n>1$, then relation \eqref{r2.0} shows that
$\pi\left(z_{2n}\right)^{\beta_{2n}}u \in \bigcap_{k=1}^{2n-1} \ker \pi\left(z_k^*\right)$ for all $\beta_{2n} \in \mathbb{N}_0$, and the same proposition again gives $\left\|\pi\left(z_{2n}\right)^{\alpha_{2n}}u\right\|=\|u\|$.

Thus $\|u_\alpha\|=\|u\|$, completing the proof.
\end{proof}

For each $i \in \left\{1,2,\ldots,2n\right\}$, we denote by $\varepsilon_i \in \mathbb{N}_0^{2n}$ the multi-index whose $i$-th entry is $1$ and whose remaining entries are $0$.

\begin{lmma}\label{Representation on generating elements for odd}
Let $\pi$ be a representation of $\mathcal{B}_{2n+1}$ and let $K := \bigcap_{i=1}^{2n} \ker \pi\left(z_i^*\right)$. For any $u \in K$ and any multi-index $\alpha = \left(\alpha_1, \dots, \alpha_{2n}\right) \in \mathbb{N}_0^{2n}$, the following relations hold:
\begin{enumerate}[label=(\arabic*)]
    \item The action of $\pi\left(z_{2n+1}\right)$ is given by
    $$ \pi\left(z_{2n+1}\right) u_{\alpha} = \begin{cases}
    \pi\left(z_{2n+1}\right)u & \text{if } \alpha = 0, \\
    0 & \text{otherwise}.
    \end{cases} $$

    \item The action of $\pi\left(z_{2n+1}^*\right)$ is given by
    $$ \pi\left(z_{2n+1}^*\right)u_{\alpha} = \begin{cases}
    \pi\left(z_{2n+1}^*\right)u & \text{if } \alpha = 0, \\
    0 & \text{otherwise}.
    \end{cases} $$

    \item For $i \in \{1, \dots, 2n\}$, the action of $\pi\left(z_i\right)$ is given by
    $$ \pi\left(z_i\right)u_{\alpha} = \begin{cases}
    u_{\alpha+{\varepsilon}_i} & \text{if } \alpha_1 = \cdots = \alpha_{i-1}=0, \\
    0 & \text{otherwise}.
    \end{cases}$$

    \item For $i \in \{1, \dots, 2n\}$, the action of $\pi\left(z_i^*\right)$ is given by
    $$\pi\left(z_i^*\right) u_{\alpha} = \begin{cases}
    u_{\alpha - \varepsilon_i} & \text{if } \alpha\in\mathcal{I}_{2n},\,\alpha_1 = \cdots = \alpha_{i-1} =0, \text{ and } \alpha_i\geq 1,\\
    0 & \text{otherwise}.
    \end{cases}$$
\end{enumerate}
\end{lmma}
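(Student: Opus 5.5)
Each part follows from the defining relations \eqref{r1.0}--\eqref{r5.0} and Proposition~\ref{Extra relation for odd}, applied to the word $u_\alpha=\pi(z_1)^{\alpha_1}\cdots\pi(z_{2n})^{\alpha_{2n}}u$ by examining its leftmost nontrivial letter. Parts (1) and (2) are the easy cases. If $\alpha=0$ there is nothing to prove. Otherwise let $j$ be the smallest index with $\alpha_j\geq 1$, so that $u_\alpha=\pi(z_j)v$ for some $v$. Then $\pi(z_{2n+1})u_\alpha=\pi(z_{2n+1}z_j)v=0$ by \eqref{r1.0}, since $2n+1>j$, and $\pi(z_{2n+1}^*)u_\alpha=\pi(z_{2n+1}^*z_j)v=0$ by \eqref{r2.0}, since $j\neq 2n+1$.

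For part (3), if $\alpha_1=\cdots=\alpha_{i-1}=0$, then $\pi(z_i)u_\alpha$ is the word $\pi(z_i)^{\alpha_i+1}\pi(z_{i+1})^{\alpha_{i+1}}\cdots u$, which is $u_{\alpha+\varepsilon_i}$ by definition. Note that when $i=n+1$ and $\alpha_{n+1}=1$, this vector is $0$ by Proposition~\ref{Extra relation for odd}; this is consistent with the statement because $u_{\alpha+\varepsilon_i}$ is defined for all $\alpha\in\mathbb{N}_0^{2n}$. If instead some $\alpha_j\geq1$ with $j<i$, take the smallest such $j$. Then $\pi(z_i)u_\alpha$ begins with $\pi(z_iz_j)$, which vanishes by \eqref{r1.0}.

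Part (4) is the main step, and I split it into three cases.
\begin{itemize}
\item[(a)] If some $\alpha_j\geq1$ with $j<i$ and $j$ is smallest, then $\pi(z_i^*z_j)=0$ by \eqref{r2.0}.
\item[(b)] If $\alpha_1=\cdots=\alpha_{i-1}=0$ and $\alpha_i=0$, let $j>i$ be the first nonzero index, or note that $u_\alpha=u$ if there is none. In the first situation \eqref{r2.0} gives $\pi(z_i^*z_j)=0$; in the second, $\pi(z_i^*)u=0$ because $u\in K$.
\item[(c)] If $\alpha_1=\cdots=\alpha_{i-1}=0$ and $\alpha_i\geq1$, write $u_\alpha=\pi(z_i)w$ with $w=u_{\alpha-\varepsilon_i}$, so that $\pi(z_i^*)u_\alpha=\pi(z_i^*z_i)w$.
\end{itemize}

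In case (c), expand $z_i^*z_i$ using \eqref{r3.0} for $i\neq n+1$, or \eqref{r4.0} for $i=n+1$, so that $\pi(z_i^*z_i)w$ becomes a sum of terms $\pi(z_kz_k^*)w$ with $k\geq i$ (respectively $k>n+1$). Now $w\in\bigcap_{k<i}\ker\pi(z_k^*)$: every letter of $w$ has index $\geq i$, so \eqref{r2.0} applied to its first letter shows $\pi(z_k^*)w=0$ for $k<i$. Adding $\sum_{k<i}\pi(z_kz_k^*)w=0$ and using \eqref{r5.0} then gives $\pi(z_i^*z_i)w=w$ whenever $i\neq n+1$, exactly as in Proposition~\ref{Image of omega for odd}\ref{IO.Part3}. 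For $i\neq n+1$ the condition $\alpha\in\mathcal I_{2n}$ is therefore not needed when $\alpha_{n+1}\leq1$; and if $\alpha_{n+1}\geq2$, then $u_\alpha=0$ by Proposition~\ref{Extra relation for odd}, so both sides vanish and the ``otherwise'' clause is consistent.

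For $i=n+1$, \eqref{r4.0} gives $\pi(z_{n+1}^*z_{n+1})w=\sum_{k>n+1}\pi(z_kz_k^*)w$. Here I need $\pi(z_{n+1}^*)w=0$ as well, which holds when $\alpha_{n+1}=1$, since then $w$ begins with some $z_j$, $j>n+1$, or $w=u$. The result is $w=u_{\alpha-\varepsilon_i}$. If instead $\alpha_{n+1}\geq2$, then $u_\alpha=0$, which covers the ``otherwise'' case for $\alpha\notin\mathcal I_{2n}$.

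The main obstacle I expect is the bookkeeping around the index $n+1$: keeping track of when the hypothesis $\alpha\in\mathcal I_{2n}$ is genuinely needed and when nilpotency from Proposition~\ref{Extra relation for odd} forces a zero vector.
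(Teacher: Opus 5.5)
Your proposal is correct and follows essentially the same route as the paper: parts (1)--(3) come from applying \eqref{r1.0} and \eqref{r2.0} to the leftmost letter of $u_\alpha$, and part (4) comes from \eqref{r2.0} combined with the isometry computation of Proposition~\ref{Image of omega for odd}(3). Your additional appeal to $z_{n+1}^2=0$ (Proposition~\ref{Extra relation for odd}) to handle $\alpha\notin\mathcal{I}_{2n}$ makes explicit a point that the paper's one-line proof leaves implicit.
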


\begin{proof}
The assertions (1) and (3) follow directly from relation \eqref{r1.0}. The assertion in (2) is an immediate consequence of relation \eqref{r2.0}. Finally, (4) follows from relation \eqref{r2.0} together with Proposition \ref{Image of omega for odd}\ref{IO.Part3}.
\end{proof}

\begin{lmma}\label{Joint kernel nontriviality for odd}
Let $\pi$ be a representation of $\mathcal{B}_{2n+1}$. If $\pi\left(z_{2n+1}\right) \neq 0$, then the space $\bigcap_{i=1}^{2n} \ker\pi\left(z_i^*\right)$ is non-zero.
\end{lmma}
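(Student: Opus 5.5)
The plan is to use the projection $\omega=z_{2n+1}^*z_{2n+1}$ together with Proposition \ref{Image of omega for odd}\ref{IO.Part1}. That result identifies the range of the projection $\pi(\omega)$ as exactly $\bigcap_{i=1}^{2n}\ker\pi(z_i^*)$. So it suffices to show that $\pi(\omega)\neq 0$ whenever $\pi(z_{2n+1})\neq 0$.

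First, by Lemma \ref{Properties of omega for odd}, $\omega$ is a projection, and hence so is $\pi(\omega)$. Next, in any $C^*$-algebra $\|a^*a\|=\|a\|^2$, so
$$\|\pi(\omega)\|=\|\pi(z_{2n+1})^*\pi(z_{2n+1})\|=\|\pi(z_{2n+1})\|^2>0.$$
Thus $\pi(\omega)$ is a nonzero projection, and there is a nonzero vector $h$ with $\pi(\omega)h=h$; for instance, take $h=\pi(\omega)k$ for any $k$ with $\pi(\omega)k\neq 0$.

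By Proposition \ref{Image of omega for odd}\ref{IO.Part1}, this $h$ lies in $\bigcap_{i=1}^{2n}\ker\pi(z_i^*)$. The inclusion can also be checked directly from $z_i^*\omega=0$ for $i\le 2n$, which is the adjoint of Lemma \ref{Properties of omega for odd}(1): $\pi(z_i^*)h=\pi(z_i^*\omega)h=0$. Hence the joint kernel is nonzero.

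There is no real obstacle here. The only point needing care is that the range of $\pi(\omega)$ is contained in the joint kernel, and the relation $z_i^*\omega=0$ gives this immediately.
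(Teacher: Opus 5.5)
Your proposal is correct and follows the same approach as the paper: show $\pi(\omega)=\pi(z_{2n+1})^*\pi(z_{2n+1})\neq 0$, then use Proposition \ref{Image of omega for odd}\ref{IO.Part1} to identify the range of the projection $\pi(\omega)$ with the joint kernel. Your direct check via $z_i^*\omega=0$ is a harmless extra detail.
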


\begin{proof}
Since $\pi\left(z_{2n+1}\right)\neq0$, we have
$$
\pi(\omega)
=
\pi\left(z_{2n+1}\right)^*
\pi\left(z_{2n+1}\right)
\neq0.
$$
By Proposition \ref{Image of omega for odd}(1),
$\pi(\omega)$ is the projection onto
$\bigcap_{i=1}^{2n}\ker\pi\left(z_i^*\right)$.
Hence this subspace is nonzero.
\end{proof}

\begin{ppsn}\label{Joint kernel dimension for odd}
If $\pi$ is an irreducible representation of $\mathcal{B}_{2n+1}$ such that $\pi\left(z_{2n+1}\right) \neq 0$, then the space $\bigcap_{i=1}^{2n} \ker\pi\left(z_i^*\right)$ is one-dimensional.
\end{ppsn}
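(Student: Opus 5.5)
Write $K:=\bigcap_{i=1}^{2n}\ker\pi(z_i^*)$. By Lemma~\ref{Joint kernel nontriviality for odd}, $K\neq 0$, so it remains to show $\dim K\le 1$. The strategy is to show that $\pi(z_{2n+1})|_K$ generates, inside $\mathcal{L}(K)$, a commutative $C^*$-algebra whose commutant in $\mathcal{L}(K)$ is realized through the commutant of $\pi(\mathcal{B}_{2n+1})$. Irreducibility then forces $K$ to be one-dimensional.

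\textbf{Step 1: $K$ is invariant and the restriction is unitary.} By Proposition~\ref{Invariance for odd}, $K$ is invariant under $\pi(z_{2n+1})$ and $\pi(z_{2n+1}^*)$. By Proposition~\ref{Image of omega for odd}\ref{IO.Part1}, $\pi(\omega)=\pi(z_{2n+1}^*z_{2n+1})$ acts as the identity on $K$. Since $z_{2n+1}$ is normal, $U:=\pi(z_{2n+1})|_K$ is a unitary on $K$.

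\textbf{Step 2: lifting commuting projections.} Suppose $\dim K\ge 2$. Since $C^*(U)\subseteq \mathcal{L}(K)$ is commutative, its commutant contains a projection $P\in C^*(U)'$ with $P\neq 0,1$. (If $C^*(U)$ is one-dimensional, any nontrivial projection works; otherwise take a nontrivial spectral projection of $U$.)

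Define the closed subspace
$$H_P:=\overline{\operatorname{span}}\{(Pu)_\alpha : u\in K,\ \alpha\in\mathcal{I}_{2n}\}.$$
I claim that $H_P$ is $\pi$-invariant.
\begin{itemize}
\item Invariance under $\pi(z_i)$ and $\pi(z_i^*)$ for $i\le 2n$ follows from Lemma~\ref{Representation on generating elements for odd}(3),(4). One point needs care: if $\alpha_{n+1}=1$, applying $z_{n+1}$ again gives $z_{n+1}^2=0$ by Proposition~\ref{Extra relation for odd}. Applying $z_i$ with $i<n+1$ keeps $\alpha$ inside $\mathcal{I}_{2n}$, and $z_i$ with $i>n+1$ gives zero when $\alpha_{n+1}=1$. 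So the index set $\mathcal{I}_{2n}$ suffices.
\item Invariance under $\pi(z_{2n+1})$ and $\pi(z_{2n+1}^*)$ follows from parts (1),(2) of the same lemma together with $UP=PU$.
\end{itemize}

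\textbf{Step 3: the two invariant subspaces are orthogonal.} This is the main obstacle. We need $H_P\perp H_{1-P}$, so that the projection onto $H_P$ is a nontrivial element of $\pi(\mathcal{B}_{2n+1})'$. The plan is to compute the inner products $\langle u_\alpha,v_\beta\rangle$ for $u,v\in K$.

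First, I would show that $\langle u_\alpha,v_\beta\rangle=0$ whenever $\alpha\neq\beta$. To do this, let $i$ be the first index where $\alpha$ and $\beta$ differ, say with $\alpha_i>\beta_i$. Moving adjoints across using Lemma~\ref{Representation on generating elements for odd}(4), this pairing equals a pairing of some $u_{\alpha'}$ against $z_i^*$ applied to a vector $v_{\beta'}$ with $\beta'_i=0$, and the latter vanishes. Since the $z_j$ for $j<i$ are isometric on the relevant subspaces, the exponents can be stripped one generator at a time.

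Second, by the isometry property in Lemma~\ref{Basis vectors norm for odd}, applied via polarization to the sesquilinear version, $\langle u_\alpha,v_\alpha\rangle=\langle u,v\rangle$. Taking $u=Pu'$ and $v=(1-P)v'$ gives $\langle u,v\rangle=0$, so the two generating families are orthogonal.

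\textbf{Step 4: conclusion.} The projection onto $H_P$ is then a nontrivial projection commuting with $\pi$, because $0\neq PK\subseteq H_P$ and $0\neq(1-P)K\subseteq H_{1-P}\perp H_P$. This contradicts irreducibility. Hence $\dim K=1$.
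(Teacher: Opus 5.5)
Your proof is correct and follows essentially the paper's strategy: take a nontrivial $C^*(\pi(z_{2n+1}))$-invariant subspace $PK$ of $K$, and show that $\overline{\operatorname{span}}\{(Pu)_\alpha : u\in K,\ \alpha\in\mathcal{I}_{2n}\}$ is a nonzero proper $\pi$-invariant subspace. Your use of $z_{n+1}^2=0$ to keep the span indexed by $\mathcal{I}_{2n}$ fills in a detail the paper leaves implicit. The difference is in how properness is shown: the paper notes that $\pi(\omega)u_\alpha=0$ for $\alpha\neq 0$, so $\pi(\omega)$ maps $H_P$ onto $PK\subsetneq K=\pi(\omega)H$. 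That makes your Step 3 orthogonality computation, which is essentially the later orthonormal-basis argument run with two vectors $u,v\in K$, valid but unnecessary.
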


\begin{proof}
Let
$$
K:=\bigcap_{i=1}^{2n}\ker\pi\left(z_i^*\right).
$$
By Lemma \ref{Joint kernel nontriviality for odd},
$K\neq\{0\}$.

Suppose that $\dim(K)>1$. Consider the action of the commutative C*-algebra $C^*\left(z_{2n+1}\right)$ generated by the element $z_{2n+1}$ on the subspace $K$. Since $\dim\left(K\right)>1$, there exists a proper non-zero closed subspace $W \subsetneq K$ that is invariant under the action of this algebra. Let $H^W$ denote the closed linear span of all vectors $u_\alpha$ with $u\in W$ and $\alpha\in\mathcal{I}_{2n}$; that is,
$$
H^W := \overline{\operatorname{span}}\left\{u_\alpha : u\in W,\ \alpha\in\mathcal{I}_{2n}\right\}.
$$
This subspace is nonzero. By Lemma
\ref{Representation on generating elements for odd},
it is invariant under each $\pi(z_i)$ and $\pi(z_i^*)$,
and hence under $\pi$.

By Lemma \ref{Properties of omega for odd}(1) and
Proposition \ref{Image of omega for odd}(1),
$$
\pi(\omega)u_\alpha
=
\begin{cases}
u,&\text{if }\alpha=0,\\
0,&\text{otherwise}.
\end{cases}
$$
Since $W$ is closed, it follows that
$\pi(\omega)(H^W)=W$. On the other hand,
$\pi(\omega)(H)=K$. Thus $H^W\neq H$, since $W\neq K$.

We have obtained a proper nonzero closed
$\pi$-invariant subspace of $H$, contradicting
irreducibility. Therefore $\dim(K)=1$.
\end{proof}

\begin{lmma}\label{ON basis for odd}
Let $\pi$ be an irreducible representation of $\mathcal{B}_{2n+1}$ on a Hilbert space $H$ such that $\pi\left(z_{2n+1}\right)\neq 0$. If $u$ is an element of $\bigcap_{i=1}^{2n}\ker\pi\left(z_i^*\right)$ such that $\left\|u\right\| = 1$, then the set $\left\{u_{\alpha}: \alpha \in \mathcal{I}_{2n}\right\}$ is an orthonormal basis for $H$.
\end{lmma}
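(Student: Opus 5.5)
We must show three things about $\{u_\alpha:\alpha\in\mathcal{I}_{2n}\}$: each $u_\alpha$ has norm one, distinct $u_\alpha$ are orthogonal, and together they span a dense subspace of $H$.

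\emph{Normalization.} This is immediate from Lemma~\ref{Basis vectors norm for odd}.

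\emph{Orthogonality.} Take $\alpha\neq\beta$ in $\mathcal{I}_{2n}$ and compute $\langle u_\alpha,u_\beta\rangle$ by moving operators across using Lemma~\ref{Representation on generating elements for odd}(4).
\begin{enumerate}
\item Let $i$ be the smallest index with $\alpha_i+\beta_i>0$. If $\alpha_i=\beta_i$, write $u_\alpha=\pi(z_i)^{\alpha_i}u_{\alpha'}$ with $\alpha_1=\dots=\alpha_i=0$ in $\alpha'$, and similarly for $\beta$.
\item Since $\pi(z_i^*)\pi(z_i)$ acts as the identity on vectors $u_\gamma$ with $\gamma_1=\dots=\gamma_{i-1}=0$ (Lemma~\ref{Representation on generating elements for odd}(3),(4)), we get $\langle u_\alpha,u_\beta\rangle=\langle u_{\alpha'},u_{\beta'}\rangle$. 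Induct until the first differing index.
\item At the first index $i$ where they differ, say $\alpha_i>\beta_i$, peel off $\beta_i$ copies as above. We are left with $\langle \pi(z_i)u_{\alpha''},u_{\beta''}\rangle=\langle u_{\alpha''},\pi(z_i^*)u_{\beta''}\rangle$, where $\beta''_1=\dots=\beta''_i=0$.
\item Then $\pi(z_i^*)u_{\beta''}=0$ by Lemma~\ref{Representation on generating elements for odd}(4), since the condition $\beta''_i\geq1$ fails.
\item If all indices up to $2n$ agree, then $\alpha=\beta$.
\end{enumerate}
Care is needed at $i=n+1$, where exponents are at most one, but the same formulas apply.

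\emph{Completeness.} Set
$$H_0:=\overline{\operatorname{span}}\{u_\alpha:\alpha\in\mathcal{I}_{2n}\}.$$
It is nonzero since $u\in H_0$. We show it is invariant under every $\pi(z_j)$ and $\pi(z_j^*)$; irreducibility then forces $H_0=H$.
\begin{enumerate}
\item For $1\le j\le 2n$, Lemma~\ref{Representation on generating elements for odd}(3),(4) send each $u_\alpha$ to another $u_\gamma$ or to $0$.
\item One must check that $\pi(z_j)u_\alpha$ stays inside the index set $\mathcal{I}_{2n}$ when it is nonzero. The only problematic case is $j=n+1$ with $\alpha_{n+1}=1$. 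The image is nonzero only if $\alpha_1=\dots=\alpha_n=0$, and then it is $\pi(z_{n+1})^2u_{\cdots}=0$ by Proposition~\ref{Extra relation for odd}.
\item For $j=2n+1$, Lemma~\ref{Representation on generating elements for odd}(1),(2) reduce everything to $\pi(z_{2n+1})u$ and $\pi(z_{2n+1}^*)u$.
\item By Proposition~\ref{Invariance for odd}, these two vectors lie in $K=\bigcap_{i=1}^{2n}\ker\pi(z_i^*)$. By Proposition~\ref{Joint kernel dimension for odd}, $K=\mathbb{C}u$, so both are multiples of $u=u_0$.
\end{enumerate}
Thus $H_0$ is a nonzero closed invariant subspace, and irreducibility gives $H_0=H$.

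The main obstacle is the bookkeeping in the orthogonality induction, in particular the index $n+1$ and the nilpotency $z_{n+1}^2=0$. Everything else follows directly from the lemmas already established.
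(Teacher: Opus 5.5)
Your proposal is correct and follows essentially the same route as the paper. Density comes from irreducibility applied to the nonzero invariant closed span, using Lemma~\ref{Representation on generating elements for odd} together with $K=\mathbb{C}u$. Orthogonality comes from peeling off equal powers of $\pi(z_i)$ via the isometry property and then killing the first mismatch with $\pi(z_i^*)$ via relation~\eqref{r2.0}. You are slightly more explicit than the paper on two points: at the index $n+1$, where $z_{n+1}^2=0$ keeps the span indexed by $\mathcal{I}_{2n}$, and on normalization coming from Lemma~\ref{Basis vectors norm for odd}.
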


\begin{proof}
Denote $K:=\bigcap_{i=1}^{2n}\ker\pi\left(z_i^*\right)$. By Lemma \ref{Representation on generating elements for odd}, the subspace $H^K$ is non-zero and $\pi$-invariant. Since $\pi$ is irreducible, we have $H^K=H$. Consequently, $\overline{\text{span}}\left\{u_{\alpha}:\alpha\in\mathcal{I}_{2n}\right\}=H$.

It remains to show that $\left\{u_{\alpha}:\alpha\in\mathcal{I}_{2n}\right\}$ is orthogonal. Let $\alpha,\beta\in\mathcal{I}_{2n}$ and assume that $\left\langle u_{\alpha},u_{\beta}\right\rangle\neq 0$. For $1\leq i\leq 2n$, denote
$$
u_{\alpha}^{(i+1)}=
\begin{cases}
\pi\left(z_{i+1}\right)^{\alpha_{i+1}}\cdots\pi\left(z_{2n}\right)^{\alpha_{2n}}u,& 1\leq i\leq 2n-1,\\
u,& i=2n,
\end{cases}
$$
and denote $u_{\beta}^{(i+1)}$ analogously. We show by induction on $i\in\left\{1,\ldots,2n\right\}$ that $\alpha_i=\beta_i$ and $\left\langle u_{\alpha},u_{\beta}\right\rangle
=
\left\langle u_{\alpha}^{(i+1)},u_{\beta}^{(i+1)}\right\rangle$.

For $i=1$, suppose $\alpha_1<\beta_1$. Since $\pi\left(z_1\right)$ is an isometry by Proposition \ref{Image of omega for odd}(3), we have $\pi\left(z_1^*\right)\pi\left(z_1\right)=I$. Then
\begin{align*}
\left\langle u_{\alpha},u_{\beta}\right\rangle
&=\left\langle u_{\alpha}^{(2)},\pi\left(z_1^*\right)^{\alpha_1}\pi\left(z_1\right)^{\beta_1}u_{\beta}^{(2)}\right\rangle\\
&=\left\langle \pi\left(z_1^*\right)\pi\left(z_2\right)^{\alpha_2}\cdots\pi\left(z_{2n}\right)^{\alpha_{2n}}u,
\pi\left(z_1\right)^{\beta_1-\alpha_1-1}u_{\beta}^{(2)}\right\rangle.
\end{align*}
By relation \eqref{r2.0}, the first vector in the last inner product is zero, contradicting our assumption. A similar argument applies if $\beta_1<\alpha_1$. Hence $\alpha_1=\beta_1$, and $\left\langle u_{\alpha},u_{\beta}\right\rangle
=
\left\langle u_{\alpha}^{(2)},u_{\beta}^{(2)}\right\rangle$.

Assume now that for some $1\leq j<2n$ we have $\alpha_j=\beta_j$ and $\left\langle u_{\alpha},u_{\beta}\right\rangle
=
\left\langle u_{\alpha}^{(j+1)},u_{\beta}^{(j+1)}\right\rangle$.

Suppose $\alpha_{j+1}<\beta_{j+1}$. Then
$$
\left\langle u_{\alpha}^{(j+1)},u_{\beta}^{(j+1)}\right\rangle
=
\left\langle u_{\alpha}^{(j+2)},
\pi\left(z_{j+1}^*\right)^{\alpha_{j+1}}\pi\left(z_{j+1}\right)^{\beta_{j+1}}u_{\beta}^{(j+2)}\right\rangle.
$$
If $j\neq n$, then by relation \eqref{r2.0} we have, for every $\gamma_{j+1}\in\mathbb{N}_0$, that
$$
\pi\left(z_{j+1}\right)^{\gamma_{j+1}}u_{\beta}^{(j+2)}
\in
\bigcap_{k=1}^j \ker\pi\left(z_k^*\right).
$$
Consequently, repeated application of Proposition \ref{Image of omega for odd}\ref{IO.Part3} yields
$$
\pi\left(z_{j+1}^*\right)^{\alpha_{j+1}}\pi\left(z_{j+1}\right)^{\beta_{j+1}}u_{\beta}^{(j+2)}
=
\pi\left(z_{j+1}\right)^{\beta_{j+1}-\alpha_{j+1}}u_{\beta}^{(j+2)}.
$$
If $j=n$, then $\alpha_{j+1}=0$ and $\beta_{j+1}=1$, and the same identity holds. Thus,
$$
\left\langle u_{\alpha}^{(j+1)},u_{\beta}^{(j+1)}\right\rangle
=
\left\langle \pi\left(z_{j+1}^*\right)u_{\alpha}^{(j+2)},
\pi\left(z_{j+1}\right)^{\beta_{j+1}-\alpha_{j+1}-1}u_{\beta}^{(j+2)}\right\rangle.
$$
By relation \eqref{r2.0}, the first vector in the above inner product is zero. Hence, $\left\langle u_{\alpha}^{(j+1)},u_{\beta}^{(j+1)}\right\rangle=0$, which implies $\left\langle u_{\alpha},u_{\beta}\right\rangle=0$. This contradicts our assumption. The case $\beta_{j+1}<\alpha_{j+1}$ is similar. Hence $\alpha_{j+1}=\beta_{j+1}$, and $\left\langle u_{\alpha}^{(j+1)},u_{\beta}^{(j+1)}\right\rangle
=
\left\langle u_{\alpha}^{(j+2)},u_{\beta}^{(j+2)}\right\rangle$. By induction, $\alpha=\beta$. Therefore the family $\left\{u_{\alpha}:\alpha\in\mathcal{I}_{2n}\right\}$ is an orthonormal basis of $H$.
\end{proof}

Let $\pi:\mathcal{B}_{2n+1}\to\mathcal{L}(H)$ be an irreducible
representation. First suppose that $\pi(z_{2n+1})\neq 0$, and set
$$
K:=\bigcap_{i=1}^{2n}\ker\pi\left(z_i^*\right).
$$
We describe $\pi$ in the following steps.

\begin{itemize}
\item By Proposition~\ref{Image of omega for odd}\ref{IO.Part4}, $\sigma\left(\pi\left(z_{2n+1}^*z_{2n+1}\right)\right)=\{0,1\}$. Proposition \ref{Image of omega for odd}\ref{IO.Part1} identifies $K$ as the eigenspace corresponding to the eigenvalue $1$. Moreover, by Proposition \ref{Joint kernel dimension for odd},
$K$ is one-dimensional. Fix a unit vector $u\in K$.

\item By Proposition \ref{Invariance for odd}, $K$ is invariant under $\pi\left(z_{2n+1}\right)$. Hence $\pi\left(z_{2n+1}\right)u=tu$ for some $t\in\mathbb{C}$. Since $\pi\left(z_{2n+1}^*z_{2n+1}\right)$ acts as the identity on $K$, it follows that $t\in\mathbb{T}$.

\item By Lemma \ref{ON basis for odd}, $\left\{u_\alpha:\alpha\in\mathcal{I}_{2n}\right\}$ is an orthonormal
basis of $H$. Lemma \ref{Representation on generating elements for odd} describes the action of each $\pi\left(z_i\right)$ on this basis in terms of $t$. Thus $\pi$ is determined, up to unitary equivalence, by $t$. We denote the corresponding representation by
$\pi_{t,2n+1}$.
\end{itemize}

More generally, let $k$ be the largest index such that
$\pi(z_k)\neq 0$. Such an index exists by relation~\eqref{r5.0}.
Relation~\eqref{r4.0} implies that $k\neq n+1$, while
relation~\eqref{r3.0} shows that $\pi(z_k)$ is normal.
Applying the same analysis gives the following two cases.

\medskip
\noindent\textbf{Case 1.} If $k\geq 2$, then $\bigcap_{i=1}^{k-1} \ker \pi\left(z_i^*\right)$ is one-dimensional, and $\pi$ is uniquely determined by a scalar $t\in\mathbb{T}$ such that $\pi\left(z_k\right)u=tu$, where $u\in\bigcap_{i=1}^{k-1}\ker \pi\left(z_i^*\right)$ and $\|u\|=1$. We denote this representation by $\pi_{t,k}$.

\medskip
\noindent\textbf{Case 2.} If $k=1$, then the representing Hilbert space is one-dimensional, and $\pi$ is uniquely determined by a scalar $t\in\mathbb{T}$ such that $\pi\left(z_1\right)u=tu$, where $u\in H\setminus\{0\}$. We denote this representation by $\pi_{t,1}$.

The above characterization of the irreducible representations of $\mathcal{B}_{2n+1}$ leads to the following theorem.

\begin{thm}\label{Characterization of irr. repn odd case}
Let $\pi : \mathcal{B}_{2n+1} \rightarrow \mathcal{L}\left(H\right)$ be an irreducible representation.
Then $\pi$ is determined, up to unitary equivalence, by a unique pair
$(t,k) \in \mathbb{T} \times \left\{1,2,\ldots,n,n+2,n+3,\ldots,2n+1\right\}$,
characterized as follows.

\begin{enumerate}[label=\normalfont(\roman*)]
\item
$k \in \left\{1,2,\ldots,2n+1\right\} \setminus \left\{n+1\right\}$ is the unique index such that
$\pi\left(z_k\right) \neq 0$ and $\pi\left(z_i\right)=0$ for all
$k<i\leq 2n+1$.

\item
The scalar $t$ is uniquely determined as follows.
\begin{itemize}
\item
If $k=1$, then $\pi\left(z_1\right)u=t u$ for all $u \in H$.
\item
If $k \geq 2$, then the subspace
$\bigcap_{i=1}^{k-1} \ker \pi\left(z_i^*\right)$ is one-dimensional, and
$\pi\left(z_k\right)u=t u$ for all
$u \in \bigcap_{i=1}^{k-1} \ker \pi\left(z_i^*\right)$.
\end{itemize}
\end{enumerate}
\end{thm}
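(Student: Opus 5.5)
Let $\pi:\mathcal{B}_{2n+1}\to\mathcal{L}(H)$ be irreducible and let $k$ be the largest index with $\pi(z_k)\neq 0$. Such a $k$ exists, since otherwise relation \eqref{r5.0} would give $I=0$. The plan is to pass to the quotient $\mathcal{B}^k_{2n+1}$ and rerun the analysis already carried out for $k=2n+1$.

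\textbf{Reduction and the choice of $k$.} First, $k\neq n+1$: if $\pi(z_i)=0$ for all $i>n+1$, then relation \eqref{r4.0} forces $\pi(z_{n+1})^*\pi(z_{n+1})=0$, so $\pi(z_{n+1})=0$. By Remark \ref{rem1}, $\pi$ factors through the quotient $\mathcal{B}^k_{2n+1}$. Relation \eqref{r3.0} shows that $\pi(z_k)$ is normal, since the sum over indices larger than $k$ vanishes. Thus part (i) of the statement, namely existence and uniqueness of $k$, holds by construction.

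\textbf{Repeating the $k=2n+1$ argument.} Next I would go through Lemma \ref{Properties of omega for odd} up to Lemma \ref{ON basis for odd}, replacing $2n+1$ by $k$. Put $\omega_k:=z_k^*z_k$ and $K_k:=\bigcap_{i=1}^{k-1}\ker\pi(z_i^*)$. Using $z_kz_i=0$ for $i<k$ and $z_i^*z_k=0$ for $i\neq k$, we get that $\omega_k z_i=0$ for $i<k$. Combining \eqref{r2.0} with \eqref{r5.0}, which now reads $\sum_{i\le k}z_iz_i^*=1$, we get that $\omega_k$ is a projection. Then $\pi(\omega_k)$ is the projection onto $K_k$. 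This subspace is nonzero because $\pi(z_k)\neq 0$, and it is invariant under $\pi(z_k)$ and $\pi(z_k^*)$.

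Next define $u_\alpha:=\pi(z_1)^{\alpha_1}\cdots\pi(z_{k-1})^{\alpha_{k-1}}u$ for $u\in K_k$ and $\alpha\in\mathbb{N}_0^{k-1}$. When $k>n+1$ we impose $\alpha_{n+1}\le 1$, using Proposition \ref{Extra relation for odd}. The isometry statements of Proposition \ref{Image of omega for odd}(3) carry over verbatim. The isometric restriction of $z_{n+1}$ is used only when $k>n+1$, and there \eqref{r4.0} is available. With this, the analogues of Lemmas \ref{Basis vectors norm for odd}, \ref{Representation on generating elements for odd} and \ref{ON basis for odd} go through.

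The argument of Proposition \ref{Joint kernel dimension for odd} then gives $\dim K_k=1$: a proper $C^*(\pi(z_k))$-invariant subspace $W\subsetneq K_k$ would generate a proper $\pi$-invariant subspace $H^W$, since $\pi(\omega_k)H^W=W$. Fixing a unit vector $u\in K_k$, we get $\pi(z_k)u=tu$, and $t\in\mathbb{T}$ because $\pi(\omega_k)u=u$. The action of all generators on the orthonormal basis $\{u_\alpha\}$ is then given by formulas depending only on $t$ and $k$. Hence two irreducibles with the same pair $(t,k)$ are unitarily equivalent via $u_\alpha\mapsto u'_\alpha$.

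\textbf{The case $k=1$.} Here $z_1$ is an isometry with $z_1z_1^*=1$, so $\pi(z_1)$ is unitary and generates $\pi(\mathcal{B}_{2n+1})$. Irreducibility then forces $\dim H=1$ and $\pi(z_1)=t\in\mathbb{T}$.

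\textbf{Uniqueness of the pair.} Finally, $k$ is an invariant of the unitary equivalence class, since it is read off from which $\pi(z_i)$ vanish. For $k\ge 2$, $K_k$ is canonically attached to $\pi$, and $t$ is the eigenvalue of $\pi(z_k)$ on the line $K_k$, so $t$ is invariant as well.

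The main obstacle I expect is checking that the $2n+1$ arguments really transfer to general $k$ in the two regimes $k\le n$ and $k\ge n+2$. In particular, one must confirm that the constraint $\alpha_{n+1}\le1$ and the special isometry domain for $z_{n+1}$ enter only when $k\ge n+2$, and that orthogonality of $\{u_\alpha\}$ still holds when $k\le n$ with no $z_{n+1}$ present.
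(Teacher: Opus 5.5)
Your proposal is correct and follows the same route as the paper. You take the largest $k$ with $\pi(z_k)\neq 0$, exclude $k=n+1$ via \eqref{r4.0}, and rerun the $k=2n+1$ analysis: the projection $z_k^*z_k$ onto $\bigcap_{i<k}\ker\pi(z_i^*)$, one-dimensionality via the invariant-subspace argument, and the orthonormal basis $\{u_\alpha\}$, with $k=1$ handled separately. This spells out the transfer to general $k$ that the paper compresses into ``applying the same analysis,'' and the checks you flag (that $\alpha_{n+1}\le 1$ and the special isometry domain for $z_{n+1}$ enter only when $k\ge n+2$) do go through.
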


Since $\mathcal{B}_{2n+1}$ is universal, there exists a surjective $^*$-homomorphism
$$\iota : \mathcal{B}_{2n+1} \rightarrow C\left(SO_0\left(2n+1\right)/SO_0\left(2n-1\right)\right)$$
sending the generator
$z_i\in\mathcal{B}_{2n+1}$ to the element $z_i\in C\left(SO_0\left(2n+1\right)/SO_0\left(2n-1\right)\right)$.

\brmrk \label{Existence of irr. repn for each tuple}
By Theorem \ref{Characterization of irr. repn odd case}, each irreducible representation of $\mathcal{B}_{2n+1}$ is determined, up to unitary equivalence, by a unique pair
$(t,k)\in\mathbb{T}\times\left\{1,2,\ldots,n,n+2,n+3,\ldots,2n+1\right\}$.
As in the preceding discussion, we denote the corresponding representation by $\pi_{t,k}$.

We now show that every such pair occurs. For each $(t,k)$, define
$$
\pi_{t,k}=\begin{cases}
\sigma_{(-1)^{k-1}t,k-1}\circ\iota, &\text{if } 1\leq k\leq n,\\
\sigma_{(-1)^{n-1}t,k-2}\circ\iota, &\text{if } n+2\leq k\leq 2n+1.
\end{cases}
$$
Since $\iota$ is surjective, Proposition \ref{Natural irr. repn} implies that each $\pi_{t,k}$ is irreducible. Moreover, Remark \ref{Indexing of the irreducible representations} shows that its parameters in Theorem \ref{Characterization of irr. repn odd case} are precisely $(t,k)$.

Thus every pair $(t,k)$ corresponds to an irreducible representation of $\mathcal{B}_{2n+1}$, unique up to unitary equivalence. From now on, we use $\pi_{t,k}$ for the representative defined above.
\ermrk

With the notation introduced in Remark \ref{Existence of irr. repn for each tuple}, the following corollary is an immediate consequence of Theorem \ref{Characterization of irr. repn odd case}.

\begin{crlre}\label{All irr. repn odd}
Up to unitary equivalence, the family
$$\left\{\pi_{t,k} : t\in\mathbb{T},\,k \in \left\{1,2,\ldots,n,n+2,n+3,\ldots,2n+1\right\}\right\}$$
is a complete set of irreducible representations of $\mathcal{B}_{2n+1}$.
\end{crlre}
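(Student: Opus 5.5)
The corollary combines two facts proved above. Theorem \ref{Characterization of irr. repn odd case} is the uniqueness half: every irreducible representation determines a pair. Remark \ref{Existence of irr. repn for each tuple} is the existence half: every pair is realized by the explicit representative $\pi_{t,k}$. The plan has three steps.

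First, fix an arbitrary irreducible representation $\pi:\mathcal{B}_{2n+1}\to\mathcal{L}(H)$. By Theorem \ref{Characterization of irr. repn odd case} it has an associated pair $(t,k)\in\mathbb{T}\times\{1,\ldots,n,n+2,\ldots,2n+1\}$.
\begin{itemize}
\item $k$ is the largest index with $\pi(z_k)\neq 0$. Relation \eqref{r5.0} guarantees such an index exists, and relation \eqref{r4.0} rules out $k=n+1$.
\item $t$ is the eigenvalue of $\pi(z_k)$ on the one-dimensional joint kernel $\bigcap_{i<k}\ker\pi(z_i^*)$ (or on $H$ itself when $k=1$).
\end{itemize}
The theorem says $\pi$ is determined up to unitary equivalence by this pair. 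Concretely, one chooses a unit vector $u$ in the joint kernel, and the orthonormal basis $\{u_\alpha\}$ of Lemma \ref{ON basis for odd} (in its analogue for general $k$) intertwines any two irreducibles sharing the same pair. This is because Lemma \ref{Representation on generating elements for odd} writes the action of every generator on this basis purely in terms of $t$.

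Second, I compare $\pi$ with the representative $\pi_{t,k}$ from Remark \ref{Existence of irr. repn for each tuple}. That remark shows $\pi_{t,k}$ is irreducible: it is Proposition \ref{Natural irr. repn} composed with the surjection $\iota$. By Remark \ref{Indexing of the irreducible representations}, its invariants in the sense of Theorem \ref{Characterization of irr. repn odd case} are exactly $(t,k)$. Since $\pi$ and $\pi_{t,k}$ have the same pair, the uniqueness statement gives $\pi\simeq\pi_{t,k}$. So the family is complete.

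Third, I check that the family is irredundant. If $\pi_{t,k}\simeq\pi_{t',k'}$, then both invariants are preserved under unitary equivalence.
\begin{itemize}
\item $k$ is preserved because vanishing of $\pi(z_i)$ is preserved.
\item $t$ is preserved because a unitary intertwiner maps joint kernels onto joint kernels and preserves eigenvalues.
\end{itemize}
Hence $(t,k)=(t',k')$.

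The only step with real content is the second one: confirming that the parameters of $\pi_{t,k}$ read off from Remark \ref{Indexing of the irreducible representations} match the normalization in Theorem \ref{Characterization of irr. repn odd case}. Here one checks three things. The largest nonvanishing generator is $z_k$, with index $m+1$ or $m+2$ according to the two ranges of $m$. The joint kernel is spanned by $e_0^{\otimes m}$. The sign factors $(-1)^{k-1}$ or $(-1)^{n-1}$ built into the definition of $\pi_{t,k}$ cancel, so that $z_k$ acts on $e_0^{\otimes m}$ by exactly $t$. With that verified, the corollary follows immediately.
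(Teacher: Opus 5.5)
Your proposal is correct and follows the paper's route. The paper also derives the corollary by combining the uniqueness in Theorem~\ref{Characterization of irr. repn odd case} with the existence in Remark~\ref{Existence of irr. repn for each tuple}, where the sign factors are chosen so that the parameters of $\pi_{t,k}$ are exactly $(t,k)$. Your extra checks make explicit what the paper leaves implicit: the joint kernel being spanned by $e_0^{\otimes m}$ (which holds since $\pi_{t,k}(z_k^*z_k)=p^{\otimes m}$ is the projection onto it), and irredundancy via invariance of the pair under unitary equivalence.
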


\begin{thm}\label{Universality of Crystallization}
The $^*$-homomorphism $\iota : \mathcal{B}_{2n+1} \rightarrow C\left(SO_0\left(2n+1\right)/SO_0\left(2n-1\right)\right)$ is an isomorphism.
\end{thm}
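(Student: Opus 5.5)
Surjectivity of $\iota$ is already known, so the plan is to prove injectivity by showing that $\iota$ is isometric. For $a\in\mathcal{B}_{2n+1}$, the universal norm is attained on irreducible representations:
\[
\|a\|=\sup\{\|\pi(a)\| : \pi \text{ an irreducible representation of } \mathcal{B}_{2n+1}\}.
\]
This is the standard fact that a $C^*$-algebra has a separating family of irreducible representations: for each state there is a pure state attaining the norm, and one takes its GNS representation. By Corollary \ref{All irr. repn odd}, every irreducible representation of $\mathcal{B}_{2n+1}$ is unitarily equivalent to some $\pi_{t,k}$ with $t\in\mathbb{T}$ and $k\in\{1,\dots,n,n+2,\dots,2n+1\}$. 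Hence
\[
\|a\|=\sup_{t,k}\|\pi_{t,k}(a)\|.
\]

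The key observation, from Remark \ref{Existence of irr. repn for each tuple}, is that each representative factors through $\iota$: explicitly, $\pi_{t,k}=\sigma_{s,m}\circ\iota$ for a suitable $s\in\mathbb{T}$ and $m\in\{0,\dots,2n-1\}$. Each $\sigma_{s,m}$ is the restriction to $C(SO_0(2n+1)/SO_0(2n-1))$ of a $^*$-homomorphism defined on $C(\mathbb{T})\otimes\mathscr{T}^{\otimes(2n-1)}$, so it is contractive, and $\iota$ is contractive as a $^*$-homomorphism. Therefore
\[
\|\pi_{t,k}(a)\|=\|\sigma_{s,m}(\iota(a))\|\le\|\iota(a)\|\le\|a\|.
\]
Taking the supremum over $(t,k)$ gives $\|\iota(a)\|=\|a\|$. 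In particular $\ker\iota=0$, and together with surjectivity, $\iota$ is an isomorphism.

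The only point needing care is that the equality $\|a\|=\sup_\pi\|\pi(a)\|$ survives replacing each $\pi$ by the specific representative $\pi_{t,k}$. This holds because unitary equivalence preserves norms, and Remark \ref{Existence of irr. repn for each tuple} confirms that the parameters of $\sigma_{s,m}\circ\iota$ in Theorem \ref{Characterization of irr. repn odd case} are exactly $(t,k)$. I expect no real obstacle here: the substantive input, the classification of irreducible representations, is already established. The argument also shows, as a byproduct, that the $\sigma_{t,m}$ exhaust the irreducible representations of $C(SO_0(2n+1)/SO_0(2n-1))$, since $\iota$ identifies the two algebras.
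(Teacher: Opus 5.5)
Your proposal is correct and follows the paper's argument. The paper also reduces injectivity to the fact that every irreducible representation of $\mathcal{B}_{2n+1}$ factors through $\iota$, using Corollary \ref{All irr. repn odd} and the definition of $\pi_{t,k}$ in Remark \ref{Existence of irr. repn for each tuple}; your norm estimate $\|a\|=\sup_{t,k}\|\pi_{t,k}(a)\|\le\|\iota(a)\|$ simply spells out the standard fact that irreducible representations separate points.
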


\begin{proof}
Since $\iota$ is surjective, it remains to prove that $\iota$ is injective. For this, it suffices to show that every irreducible representation of $\mathcal{B}_{2n+1}$ factors through $\iota$. This follows from Corollary \ref{All irr. repn odd} and the definition of $\pi_{t,k}$ in Remark \ref{Existence of irr. repn for each tuple}.
\end{proof}

\begin{crlre} \label{Characterization of all irr. repn odd}
Up to unitary equivalence, the family
$$
\left\{\sigma_{t,m}: t\in\mathbb{T},\, m\in\left\{0,1,\ldots, 2n-1\right\}\right\}
$$
forms a complete set of irreducible representations of $C\left(SO_0(2n+1)/SO_0(2n-1)\right)$.
\end{crlre}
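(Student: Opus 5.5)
The plan is to transport the classification of irreducible representations of $\mathcal{B}_{2n+1}$ (Corollary \ref{All irr. repn odd}) across the isomorphism $\iota$ of Theorem \ref{Universality of Crystallization}, and then re-index. Write $A:=C\left(SO_0(2n+1)/SO_0(2n-1)\right)$. Since $\iota$ is an isomorphism, $\rho\mapsto\rho\circ\iota$ is a bijection between irreducible representations of $A$ and those of $\mathcal{B}_{2n+1}$, and it preserves unitary equivalence in both directions.

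\emph{Completeness.} Let $\rho$ be an irreducible representation of $A$. Then $\rho\circ\iota$ is an irreducible representation of $\mathcal{B}_{2n+1}$, so by Corollary \ref{All irr. repn odd} it is unitarily equivalent to some $\pi_{t,k}$. By the definition in Remark \ref{Existence of irr. repn for each tuple}, $\pi_{t,k}=\sigma_{s,m}\circ\iota$, where
\[
(s,m)=\left((-1)^{k-1}t,\;k-1\right)\ \text{if } 1\leq k\leq n, \qquad (s,m)=\left((-1)^{n-1}t,\;k-2\right)\ \text{if } n+2\leq k\leq 2n+1 .
\]
So a unitary intertwines $\rho\circ\iota$ with $\sigma_{s,m}\circ\iota$. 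Because $\iota$ is surjective, the same unitary intertwines $\rho$ with $\sigma_{s,m}$.

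\emph{Irreducibility and index range.} Each $\sigma_{t,m}$ is irreducible by Proposition \ref{Natural irr. repn}. The correspondence $k\mapsto m$ sends $\{1,\dots,n\}$ onto $\{0,\dots,n-1\}$ and $\{n+2,\dots,2n+1\}$ onto $\{n,\dots,2n-1\}$. Together with $t\mapsto\pm t$, this is a bijection of $\mathbb{T}\times\{1,\dots,n,n+2,\dots,2n+1\}$ onto $\mathbb{T}\times\{0,\dots,2n-1\}$. Hence the family $\{\sigma_{t,m}\}$ contains exactly the representations $\pi_{t,k}$, pulled back through $\iota$.

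\emph{Inequivalence.} Distinct pairs $(t,m)$ give inequivalent representations. Indeed, if $\sigma_{t,m}\simeq\sigma_{t',m'}$, then composing with $\iota$ gives $\pi_{t,k}\simeq\pi_{t',k'}$, and the uniqueness of the pair in Theorem \ref{Characterization of irr. repn odd case} forces $(t,k)=(t',k')$, hence $(t,m)=(t',m')$. The only point needing care is that the sign twist $(-1)^{k-1}$ and the shift $k-2$ for $k\geq n+2$ (skipping $k=n+1$) match up exactly. That is just the bookkeeping in Remark \ref{Existence of irr. repn for each tuple}, so I expect no real obstacle.
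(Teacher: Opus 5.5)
Your proposal is correct and takes the same route as the paper, whose proof is a one-line appeal to Corollary~\ref{All irr. repn odd} and the isomorphism $\iota$ of Theorem~\ref{Universality of Crystallization}; you spell out that transport, including the re-indexing $\pi_{t,k}=\sigma_{\pm t,m}\circ\iota$ from Remark~\ref{Existence of irr. repn for each tuple}. Your extra inequivalence argument for distinct pairs $(t,m)$ is not needed for completeness, but it follows correctly from the uniqueness of the pair in Theorem~\ref{Characterization of irr. repn odd case}.
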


\begin{proof}
The proof follows directly from Corollary~\ref{All irr. repn odd}, and Theorem~\ref{Universality of Crystallization}.
\end{proof}

Using the representations defined in Remark \ref{Existence of irr. repn for each tuple}, we obtain the following classification of the irreducible representations of the quotient $C^*$-algebras $\mathcal{B}_{2n+1}^{k}$.

\begin{ppsn} \label{Irr. repn for filtration}
For each $1\leq k\leq 2n+1$ with $k\neq n+1$, the family
$$
\left\{\pi_{t,j}: t\in\mathbb{T},\ 1\leq j\leq k,\ j\neq n+1\right\}
$$
is a complete set of irreducible representations of $\mathcal{B}_{2n+1}^{k}$, up to unitary equivalence. Here, we use the same notation for $\pi_{t,j}$ and
the representation it induces on the quotient.
\end{ppsn}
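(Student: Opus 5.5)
The proof runs through the canonical isomorphism $\mathcal{B}_{2n+1}^k\cong\mathcal{B}_{2n+1}/\langle z_{k+1},\ldots,z_{2n+1}\rangle$ of Remark~\ref{rem1}. Irreducible representations of this quotient are, up to unitary equivalence, exactly the irreducible representations of $\mathcal{B}_{2n+1}$ that vanish on the ideal $J_k:=\langle z_{k+1},\ldots,z_{2n+1}\rangle$. A representation vanishes on $J_k$ if and only if it kills each of the generators $z_{k+1},\ldots,z_{2n+1}$. So the task reduces to deciding which of the $\pi_{t,j}$ from Corollary~\ref{All irr. repn odd} kill these generators.

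First, let $\pi$ be an irreducible representation of $\mathcal{B}_{2n+1}^k$. Composing with the quotient map gives an irreducible representation $\tilde\pi$ of $\mathcal{B}_{2n+1}$ with $\tilde\pi(z_i)=0$ for all $i>k$. By Corollary~\ref{All irr. repn odd}, $\tilde\pi\simeq\pi_{t,j}$ for some $t\in\mathbb{T}$ and some $j\neq n+1$. By Theorem~\ref{Characterization of irr. repn odd case}(i), $j$ is the largest index with $\pi_{t,j}(z_j)\neq0$. Unitary equivalence preserves vanishing of $\tilde\pi(z_i)$, so $\pi_{t,j}(z_i)=0$ for all $i>k$. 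Since $\pi_{t,j}(z_j)\neq0$, this forces $j\le k$. Hence $\pi$ is equivalent to the representation induced by some $\pi_{t,j}$ with $j\le k$ and $j\neq n+1$.

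Conversely, take $j\le k$ with $j\ne n+1$. By Theorem~\ref{Characterization of irr. repn odd case}(i), or directly from Remark~\ref{Indexing of the irreducible representations}, $\pi_{t,j}(z_i)=0$ for every $i>j$, and in particular for every $i>k$. Thus $\pi_{t,j}$ vanishes on $J_k$ and factors through $\mathcal{B}_{2n+1}^k$. The induced representation is still irreducible, because its image coincides with the image of $\pi_{t,j}$. Distinct pairs $(t,j)$ give inequivalent representations of the quotient, since equivalence there implies equivalence of the lifts, and the lifts are inequivalent by the uniqueness in Theorem~\ref{Characterization of irr. repn odd case}.

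I expect no serious obstacle. The only point needing care is the standard correspondence between irreducible representations of a quotient $A/J$ and irreducible representations of $A$ vanishing on $J$, with unitary equivalence respected in both directions. The hypothesis $k\neq n+1$ is only a matter of indexing: since $\mathcal{R}^{n+1}_{2n+1}=\mathcal{R}^{n}_{2n+1}$, the case $k=n+1$ would give the same list as $k=n$.
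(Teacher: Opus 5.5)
Your proposal is correct and follows the paper's own argument. You identify $\mathcal{B}_{2n+1}^{k}$ with $\mathcal{B}_{2n+1}/\langle z_{k+1},\ldots,z_{2n+1}\rangle$, so its irreducible representations are those of $\mathcal{B}_{2n+1}$ killing $z_i$ for $i>k$, and then use the characterization theorem (the top nonvanishing index of $\pi_{t,j}$ is $j$) to identify these as exactly the $\pi_{t,j}$ with $j\le k$, spelling out the equivalence, irreducibility and inequivalence bookkeeping that the paper leaves implicit.
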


\begin{proof}
Via the quotient map, the irreducible representations of
$\mathcal{B}_{2n+1}^{k}$ correspond exactly to those of
$\mathcal{B}_{2n+1}$ that vanish on $z_i$ for every $i>k$.
By Theorem~\ref{Characterization of irr. repn odd case},
these are precisely the representations $\pi_{t,j}$ with
$j\leq k$.
\end{proof}

Let $0\leq m\leq 2n-1$. Define the $C^*$-homomorphism
$$
\sigma_m : C\left(SO_0\left(2n+1\right)/SO_0\left(2n-1\right)\right)
\to C\left(\mathbb{T}\right)\otimes \mathscr{T}^{\otimes m}
$$
by
$$\sigma_m(a)(t)=\sigma_{t,m}(a), \qquad  
a\in C\left(SO_0\left(2n+1\right)/SO_0\left(2n-1\right)\right).$$

Proceeding as in the proof of \cite[Corollary 3.8]{BhuBisSau-2026aa}, it follows that $\sigma_{2n-1}$ is a faithful representation of $C\left(SO_0\left(2n+1\right)/SO_0\left(2n-1\right)\right)$. For $0 \leq m \leq 2n-1$, we define
$$
\ell(m) =
\begin{cases}
m+1, & \text{if } 0\leq m\leq n-1,\\
m+2, & \text{if } n\leq m\leq 2n-1.
\end{cases}
$$

\begin{ppsn}
Let $0 \leq m \leq 2n-2$. Then the $C^*$-algebra $\sigma_m\left(C\left(SO_0\left(2n+1\right)/SO_0\left(2n-1\right)\right)\right)$ is isomorphic to $\mathcal{B}^{\ell(m)}_{2n+1}$.
\end{ppsn}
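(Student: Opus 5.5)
Here is the plan. Write $A=C(SO_0(2n+1)/SO_0(2n-1))\cong\mathcal{B}_{2n+1}$, using Theorem \ref{Universality of Crystallization}. The strategy is to construct a $*$-homomorphism $\mathcal{B}^{\ell(m)}_{2n+1}\to\sigma_m(A)$ and then show it is injective by comparing irreducible representations.

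\textbf{Step 1: the map exists.} Set $k=\ell(m)$. For every $t\in\mathbb{T}$, Remark \ref{Indexing of the irreducible representations} gives $\sigma_{t,m}(z_r)=0$ for all $r>k$. Hence $\sigma_m(z_r)=0$ for $r>k$. The relations $\mathcal{R}_{2n+1}$ hold for the elements $\sigma_m(z_i)$ because $\sigma_m$ is a $*$-homomorphism. Therefore the elements $\sigma_m(z_i)$ satisfy $\mathcal{R}^k_{2n+1}$. By universality there is a $*$-homomorphism $\Phi:\mathcal{B}^{k}_{2n+1}\to \sigma_m(A)$ with $z_i\mapsto\sigma_m(z_i)$. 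It is surjective, since $\sigma_m(A)$ is generated by the $\sigma_m(z_i)$. Equivalently, by Remark \ref{rem1}, $\sigma_m$ factors through the quotient $\mathcal{B}_{2n+1}/\langle z_{k+1},\dots,z_{2n+1}\rangle$.

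\textbf{Step 2: reduce injectivity to representations.} It suffices to show that every irreducible representation of $\mathcal{B}^k_{2n+1}$ factors through $\Phi$. Then $\|x\|=\sup_\pi\|\pi(x)\|\le\|\Phi(x)\|$, so $\Phi$ is isometric. By Proposition \ref{Irr. repn for filtration}, the irreducible representations of $\mathcal{B}^k_{2n+1}$ are exactly the $\pi_{t,j}$ with $j\le k$, $j\ne n+1$, and $t\in\mathbb{T}$. By Remark \ref{Existence of irr. repn for each tuple}, each $\pi_{t,j}$ equals $\sigma_{s,m'}$ for $m'=\ell^{-1}(j)\le m$ and a suitable $s\in\mathbb{T}$. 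So it suffices to show that, for every $m'\le m$ and $s\in\mathbb{T}$, the representation $\sigma_{s,m'}$ of $A$ factors through $\sigma_m$. That is, we need $\sigma_{s,m'}=\theta\circ\sigma_m$ for some representation $\theta$ of $C(\mathbb{T})\otimes\mathscr{T}^{\otimes m}$.

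\textbf{Step 3: the factorization.} By definition, $\sigma_{s,m'}=(\mathrm{ev}_s\otimes 1^{\otimes m'})\circ\sigma^{2n-1}_{m'}$ and $\sigma_m(a)(t)=\sigma_{t,m}(a)$. Since $\sigma^{2n-1}_{m'}=\sigma^{m}_{m'}\circ\sigma^{2n-1}_m$, we get
\[\sigma_{s,m'}=(\mathrm{ev}_s\otimes 1^{\otimes m'})\circ\sigma^{m}_{m'}\circ\sigma^{2n-1}_m .\]
The map $\sigma_m$ is precisely $\sigma^{2n-1}_m$, viewed inside $C(\mathbb{T})\otimes\mathscr{T}^{\otimes m}$. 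So we may take $\theta=(\mathrm{ev}_s\otimes1^{\otimes m'})\circ\sigma^m_{m'}$, which is a representation of $C(\mathbb{T})\otimes\mathscr{T}^{\otimes m}$. This gives $\sigma_{s,m'}=\theta\circ\sigma_m$, as required.

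\textbf{Main obstacle.} The delicate step is this last identification: checking that $\sigma_m$, defined pointwise through $t\mapsto\sigma_{t,m}(a)$, agrees with $\sigma^{2n-1}_m$ on the generators. This amounts to $\sigma^{2n-1}_m(z_i)(t)=(\mathrm{ev}_t\otimes 1^{\otimes m})\sigma^{2n-1}_m(z_i)$, which is tautological from the definitions. I would verify it on generators and then conclude that $\Phi$ is an isomorphism.
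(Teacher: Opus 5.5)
Your proposal is correct and follows essentially the same route as the paper: the universal surjection $\mathcal{B}^{\ell(m)}_{2n+1}\to\sigma_m\left(C\left(SO_0(2n+1)/SO_0(2n-1)\right)\right)$, with injectivity deduced from the classification in Proposition~\ref{Irr. repn for filtration} and the fact that every lower representation $\pi_{t,j}$, $j\le\ell(m)$, factors through $\sigma_m$. The difference is cosmetic. The paper factors $\pi_{t,j}$ through $\pi_{t,\ell(m)}$, which is itself evaluation of $\iota_m$ at a point of $\mathbb{T}$. You instead write down the factor map $\theta=(\mathrm{ev}_s\otimes 1^{\otimes m'})\circ\sigma^m_{m'}$ explicitly, which also avoids the sign bookkeeping between $t$ and $s$ that the paper glosses over.
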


\begin{proof}
By the defining relations, there is a natural surjective
$^*$-homomorphism
$$
\iota_m:\mathcal{B}_{2n+1}^{\ell(m)}
\longrightarrow
\sigma_m\left(
C\left(SO_0\left(2n+1\right)/SO_0\left(2n-1\right)\right)
\right)
$$
such that $\iota_m(z_i)=\sigma_m(z_i)$ for
$1\leq i\leq2n+1$.

Let $a\in\ker\iota_m$. Evaluation at any $t\in\mathbb{T}$
gives
$$
\pi_{t,\ell(m)}(a)
=
\operatorname{ev}_t\left(\iota_m(a)\right)
=
0.
$$
Since $\pi_{t,j}$ factors through $\pi_{t,\ell(m)}$ and
$\pi_{t,\ell(m)}(a)=0$, we have $\pi_{t,j}(a)=0$ for every
$t\in\mathbb{T}$ and every $1\leq j<\ell(m)$ with $j\neq n+1$. Proposition \ref{Irr. repn for filtration} therefore shows
that every irreducible representation of
$\mathcal{B}_{2n+1}^{\ell(m)}$ annihilates $a$.
Hence $a=0$.

Thus $\iota_m$ is injective and consequently an isomorphism.
\end{proof}

\brmrk \label{matrix unit}
Let $\pi : C\left(SO_0\left(2n+1\right)/SO_0\left(2n-1\right)\right)\longrightarrow \mathcal{L}\left(H\right)$ be an irreducible representation. Recall the isomorphism $\iota$ described in Theorem~\ref{Universality of Crystallization}. Then, by Corollary~\ref{All irr. repn odd}, the representation $\pi\circ\iota$ is unitarily equivalent to $\pi_{t,k}$ for some $t\in\mathbb{T}$ and some $k\in\left\{1,2,\ldots,2n+1\right\}$ with $k\neq n+1$. Moreover, $\bigcap_{i=1}^{k-1} \ker \pi\left(z_i^*\right)$ (with the notational convention that $\bigcap_{i=1}^{k-1} \ker\pi\left(z_i^*\right) := H$ if $k=1$) is one-dimensional. If $k\geq 2$, then, by Lemma \ref{ON basis for odd}, by considering suitable monomials in the generators $z_1,z_2,\ldots,z_{k-1}$ and applying their images under $\pi$ to any fixed nonzero vector $u\in \bigcap_{i=1}^{k-1}\ker \pi\left(z_i^*\right)$, one obtains a basis of the representing Hilbert space $H$.
\ermrk

\brmrk
Whenever it is necessary to emphasize that the irreducible representation $\pi_{t,j}$, where $t\in\mathbb{T}$ and $1\leq j\leq k$ with $j\neq n+1$, is regarded as a representation of $\mathcal{B}_{2n+1}^{k}$, we shall denote it by $\pi_{t,j}^{k,n}$.
\ermrk

\subsection{Primitive ideal space}

For each $t \in \bbbt$ and $1 \leq r \leq k$, define  $I_{t,r}^{k,n} = \ker \pi_{t,r}^{k,n}$.
The primitive ideal space of $\mathcal{B}_{2n+1}^k$ is given by the set 
\[
\text{Prim}\left(\mathcal{B}_{2n+1}^k\right) = \left\{ I_{t,r}^{k,n} \;\middle|\; t \in T, \, 1 \le r \le k \right\}.
\]
equipped with the hull kernel toplogy $\tau$.
We define a map $\Psi: \bbbt \times \{1, 2, \dots, k\} \to \text{Prim}\left(\mathcal{B}_{2n+1}^k\right)$ by 
\[
\Psi(t, r) = \ker \pi_{t,r}^{k,n}.
\]

Let $A \subset \bbbt \times \{1, 2, \dots, k\}$. For each $r \in \{1, 2, \dots, k\}$, let $A_r$ denote the slice of $A$ at level $r$, defined as:
\[
A_r = A \cap (\bbbt \times \{r\}).
\]
Clearly, $A$ can be decomposed as the disjoint union $A = A_1 \cup A_2 \cup \dots \cup A_k$. We define the index of $A$ to be
\[
\text{ind}_A = \inf \{ r \mid A_r \neq \emptyset \}.
\]
\bdfn
A subset $A \subset \bbbt \times \{1, 2, \dots, k\}$ is said to be \emph{closed} under the topology $\mathcal{O}$ if:
\begin{enumerate}
	\item[\rm (1)] $A_{\text{ind}_A}$ is a closed set in $\bbbt$, and
	\item[\rm (2)] $A_r = \bbbt$ for all $r > \text{ind}_A$.
\end{enumerate}
\edfn

We equip the set  $\bbbt \times \{1, 2, \dots, k\}$ with this topology $\mathcal{O}$.  We will show that the primitive ideal space of $\mathcal{B}_{2n+1}^k$ is homemorphic to $(\bbbt \times \{1, 2, \dots, k\}, \mathcal{O})$.  For that, we need to establish some results.

\bppsn \label{P2}
Let $1\leq s\leq r  \leq  k$ and let $t \in \bbbt$. Then $I_{t,r}^{k,n} \subseteq I_{t,s}^{k,n}$. 
\eppsn 
\prf It follows from the fact that the irreducible representation $\pi_{t,s}^{k,n}$ factors through $\pi_{t,r}^{k,n}$. 
\qed 

\bppsn \label{P3}
For each $t \in \bbbt$ and $1 \leq r \leq k$, the primitive ideal $I_{t,r}^{k,n}$ is generated by the following elements:
\[
I_{t,r}^{k,n} = \langle z_k, z_{k-1}, \dots, z_{r+1}, z_r - t z_r z_r^* \rangle.
\]
\eppsn

\prf
Let $J = \langle z_k, z_{k-1}, \dots, z_{r+1}, z_r - t z_r z_r^* \rangle$. It follows directly from the description of the faithful representation $\pi_{t,r}^{k,n}$ that $J \subset I_{t,r}^{k,n}$.  

To establish equality, assume for contradiction that $J \subsetneq I_{t,r}^{k,n}$. Because $J$ is the intersection of all primitive ideals containing it, there must exist another primitive ideal $I_{t',s}^{k,n}$ such that $(t', s) \neq (t, r)$, $J \subset I_{t',s}^{k,n}$, and $I_{t,r}^{k,n} \nsubseteq I_{t',s}^{k,n}$.
\begin{itemize}
	\item \textbf{Case 1 ($s < r$):} In this case, Proposition~\ref{P2} implies $I_{t,r}^{k,n} \subseteq I_{t,s}^{k,n}$, yielding a contradiction. 
	
	\item \textbf{Case 2 ($s > r$):} If $s > r$, then by definition $z_s \in I_{t',s}^{k,n}$ but $z_{r} \notin I_{t',s}^{k,n}$ unless $z_r$ is in a higher ideal block. Specifically, the lower generators like $z_r$ are not elements of $I_{t',s}^{k,n}$, which directly contradicts our assumption that $J \subset I_{t',s}^{k,n}$. 
	
	\item \textbf{Case 3 ($s = r$):} By definition, we have:
	\begin{equation}\label{eq1}
		z_r - t' z_r z_r^* \in I_{t',r}^{k,n}.
	\end{equation}
	On the other hand, since $J \subset I_{t',r}^{k,n}$, the generator from $J$ satisfies:
	\begin{equation}\label{eq2}
		z_r - t z_r z_r^* \in I_{t',r}^{k,n}.
	\end{equation}
	Multiplying equations \eqref{eq1} and \eqref{eq2} by $t$ and $t'$ respectively, and subtracting the latter from the former yields $(t-t') z_r \in I_{t',r}^{k,n}$. Since $(t',s) \neq (t,r)$ and $s=r$, we must have $t' \neq t$, implying $z_r \in I_{t',r}^{k,n}$. This is a contradiction since $\pi_{t',r}^{k,n}(z_r)\neq 0$.  
\end{itemize}
Since all cases yield a contradiction, our assumption $J \subsetneq I_{t,r}^{k,n}$ is false. Thus, $J = I_{t,r}^{k,n}$.
\qed

\bthm \label{P1}
Equip  $\text{Prim}(\mathcal{B}_{2n+1}^k)$ the hull-kernel topology $\tau$. Then the map 
$$\Psi: (\bbbt \times \{1, 2, \dots, k\}, \mathcal{O}) \longrightarrow  (\text{Prim}(\mathcal{B}_{2n+1}^k), \tau);\, (t,r) \mapsto \ker \pi^{k,n}_{t,r}$$ 
is a homeomorphism. 
\ethm 
\prf The proof  follows immediately  from  Proposition (\ref{P2}, \ref{P3}).
\qed 

\bcrlre  \label{cor1} Let $1 \leq r \leq k$. Then we have 
\begin{enumerate}
	\item[(i)] $$\cap_{t \in 1\leq r \leq s} \, I_{t,r}^{k,n}= \langle z_k, z_{k-1},\cdots z_s \rangle.$$
	\item[(ii)] If $I$ is an ideal such that $z_r \notin I$, then $I\subset  \langle z_k, z_{k-1},\cdots z_r \rangle.$
\end{enumerate}
\ecrlre

\section{$K$-groups of $C\left(SO_0(2n+1)/SO_0(2n-1)\right)$}
In this section, we first give a faithful representations of $C\left(SO_0(2n+1)/SO_0(2n-1)\right)$, and using that, we compute $K$-groups of $C\left(SO_0(2n+1)/SO_0(2n-1)\right)$ with explicit generators. 
\subsection{Faithful representation of $C\left(SO_0(2n+1)/SO_0(2n-1)\right)$}

Let 
$$
L_{0}^k= \begin{cases}
	\ell^2(\mathbb{N})^{\otimes (k-1)}& \text{ if } 1\leq k \leq n, \cr \\
	\ell^2(\mathbb{N})^{\otimes (n-1)}& \text{ if } k = n+1, \cr \\
	\ell^2(\mathbb{N})^{\otimes (k-2)}\oplus  \ell^2(\mathbb{N})^{\otimes (k-2)} & \text{ if } n+2\leq k \leq 2n.\cr
\end{cases}
$$ 
and $H_0^k=L_0^k\otimes \ell^2(\mathbb{Z})$. 
For $j,k\in \bbn_0$, define  
$$T_j^k=p^{\otimes j}\otimes S^*\otimes 1^{\otimes k}.$$  
Define a map $\Phi^k_{2n+1}$ on the standard generators of $\mathcal{B}^k_{2n+1}$ by     
\[ 
\Phi^k_{2n+1}(z_j)=
T_{j-1}^{k-j}  \quad   \text{ if } 1 \leq j \leq k,\\
\] if $1 \leq k \leq n$. If $k=n+1$, then define 
\[ 
\Phi^k_{2n+1}(z_j)=\begin{cases}
	T_{j-1}^{k-j}  &  \text{ if } 1 \leq j \leq n,\cr \\
	0 &   \text{ if }  j=n+1. \cr 
\end{cases}
\] 
For $n+2\leq k \leq 2n+1$, define 
\[
\Phi^k_{2n+1}(z_j)=\begin{cases}
	T_{j-1}^{k-j-1}   \oplus T_{j-1}^{k-j-1}  & \text{ if } 1 \leq j \leq n,\\
	\begin{pmatrix}
		0 & 0\\
		p^{\otimes n} \otimes 1^{\otimes (k-n-1)} & 0\\
	\end{pmatrix}  & \text{ if } j=n+1,\\
	T_{j-1}^{k-j-1}   \oplus 0 & \text{ if } n+2\leq j \leq k.
\end{cases}
\] 
One can verify that these operators satisfy the defining relations of $\mathcal{B}^k_{2n+1}$. Hence, by the universal property of $\mathcal{B}^k_{2n+1}$, the map $\Phi^k_{2n+1}$ extends to a $*$-homomorphism
\[
\Phi^k_{2n+1}:  \mathcal{B}^k_{2n+1}  \longrightarrow \mathcal{L}(H_0^k),
\]
and therefore defines a representation of $\mathcal{B}_{2n+1}^k$. 

\blmma \label{Ar} Define $A^r=\Phi^r_{2n+1}(\mathcal{B}_{2n+1}^r)$. Then there exists a surjective $C^*$-homomorphism $\Psi^r: A^r\rightarrow A^{r-1}$ mapping $\Phi^r_{2n+1}(z_j) \mapsto \Phi^{r-1}_{2n+1}(z_j) $ for $1\leq j \leq r-1$, and $\Phi^r_{2n+1}(z_r) \mapsto 0$. \elmma 

\prf Let $n+2\leq r \leq 2n+1$. One can consider $A^r$ as a $C^*$-subalgebra of $M_2(\scrt^{\otimes (r-2)})\otimes C(\bbbt)$ by identifying the shift $S^*$ on $\ell^2(\bbz)$ with the unitary generator of $C(\bbbt)$. Let $\gamma_r=1^{\otimes ^{r-3}}\otimes \rho \otimes ev_1$. If we define $\Psi^r$ to be the $C^*$-homomorphism $\begin{pmatrix} \gamma_r& \gamma_r \\ \gamma_r& \gamma_r \\ \end{pmatrix}$ restricted to the domain $ A^r$ and with its codomain restricted to its image $ A^{r-1}$, we obtain the claim. For $2\leq r\leq n+1$, the same argument applies, except that in this case $\gamma_r$ suffices instead of $\begin{pmatrix} \gamma_r& \gamma_r \\ \gamma_r& \gamma_r \\ \end{pmatrix}$. \qed

\bthm \label{faithful}
The representation $\Phi^k_{2n+1}:  \mathcal{B}_{2n+1}^k  \longrightarrow \mathcal{L}(H_0^k)$ is faithful. 
\ethm 
\prf    
We split the proof into three cases.\\
\textbf{Case 1: $1\leq k \leq n$. } The defining relations in this case are identical to those of the $C^*$-algebra of the odd-dimensional quantum sphere $C(S_0^{2k-1})$ (see Remark~\ref{rem2}), and the representation $\Phi^k_{2n+1}$ is the standard faithful representation of $C(S_0^{2k-1})$. Hence, the claim follows.\\
\textbf{Case 2: $k= n+1$. } Note that $z_{n+1}^*z_{n+1}=0$ since $z_{j}=0$ for $n+2\leq j \leq 2n+1$. This implies that $z_{n+1}=0$, and hence $\mathcal{B}^{n+1}_{2n+1}=\mathcal{B}^n_{2n+1}$. The rest follows from the previous case.\\
\textbf{Case 3: $n+2\leq k \leq 2n+1$. } Assume the claim holds for $r=1,2,\dots, k-1$. From Lemma~\ref{Ar}, we have the following surjective $C^*$-homomorphisms:
$$ \mathcal{B}^{r}_{2n+1} \xrightarrow{\Phi^r_{2n+1}} A^r \xrightarrow{\Psi^r} A^{r-1}.$$
By the induction hypothesis for $r=k-1$, we have $A^{k-1}\cong \mathcal{B}_{2n+1}^{k-1}$. Therefore, all irreducible representations of $\mathcal{B}_{2n+1}^k$ whose kernels contain $z_k$ induce a representation of $\mathcal{B}_{2n+1}^{k-1}$, and hence factor through $\Psi^k$, which in turn factor through $\Phi^k_{2n+1}$. Moreover, for the other irreducible representations $\pi_{t,k}$ of $\mathcal{B}_{2n+1}^k$, we have the following explicit factorization:
$$\pi_{t,k}:  \mathcal{B}_{2n+1}^k \xrightarrow{\Phi^k_{2n+1}} A^k \hookrightarrow M_2(\mathscr{T}^{\otimes (k-2)}\otimes C(\bbbt))\xrightarrow{\begin{pmatrix}
		ev_t & 	ev_t \\
		ev_t  & 	ev_t \\
\end{pmatrix}} \mathcal{L}(L_0^k).$$ 
This completes the proof.
\qed \\
From now onwards, we will identify the standard generators $z_i$'s of $\mathcal{B}_{2n+1}^k$  with their image under the map $\Phi^k_{2n+1}$. 
\bcrlre \label{ideal} 
Let $1\leq k \leq 2n+1$, and  let $J_k$ be the closed ideal of $\mathcal{B}_{2n+1}^k$ defined as
$$ J_k= \begin{cases}
	\langle z_k\rangle & \mbox{ if } k\neq n+1, \cr 
	\langle z_n \rangle & \mbox{ if } k=n+1.\cr 
\end{cases} $$ 
Consider $\mathcal{K}(L_0^k) \otimes C(\mathbb{T})$ as an ideal of $M_2(\mathcal{T}^{\otimes (k-1)}) \otimes C(\mathbb{T})$. Then, the equality 
\[
\langle z_k \rangle = \mathcal{K}(L_0^k)  \otimes C(\mathbb{T})
\]
holds as subsets of $M_2(\mathcal{T}^{\otimes (k-1)}) \otimes C(\mathbb{T})$.
\ecrlre
\prf   We will prove the claim for $n+2\leq k \leq 2n+1$ as the other cases follows from the fact that $\mathcal{B}_{2n+1}^k$ is canonically isomorphic to $C(S_0^{2k-1})$.  First note that 
$$z_k= \begin{pmatrix}
	p^{\otimes k-1}\otimes S^* &0\\
	0 & 0\\
\end{pmatrix} = \begin{pmatrix} 
	p^{\otimes k-1}&0\\
	0 & 0\\
\end{pmatrix}\otimes S^*  \in \mathcal{K}(L_0^k)  \otimes C(\mathbb{T}).$$  
This proves that $J_k\subset C(\bbbt)\otimes \clk$. Moreover,  this shows that $p^{\otimes k-1}\otimes C(\bbbt) \subset J_k$.  Hence it is enough to show that 
$$p_{0,i_1} \otimes p_{0,i_2} \otimes \cdots p_{0,i_{k-1}} \otimes 1 \in J_k \mbox{ for all } i_1,i_2,\cdots i_{k-1}\in \bbn_0.$$  
Since $z_i=\pi_{1,k}(z_i)\otimes 1 \in M_2(\scrt^{\otimes (k-1)}) \otimes 1$ for $ 1\leq i \leq k-1$, the claim follows from Remark  \ref{matrix unit}. 
\qed 
\begin{lmma} \label{ses}
	For $1\leq k \leq 2n$ with $k \neq n$, one has the following short exact sequence $\chi_k$ of $C^*$-algebras:
	$$\chi_k:\quad 0\longrightarrow \langle z_{k+1}\rangle \xrightarrow{\iota} \mathcal{B}^{k+1}_{2n+1} \xrightarrow{\alpha_{k}^{k+1}} \mathcal{B}^{k}_{2n+1}\longrightarrow 0.$$ 	
	Moreover, 	the $C^*$-extension $\chi_k$ is a unital homogeneous extension of $\mathcal{B}^{k}_{2n+1}$ by 
	$C(\bbbt)\otimes \clk$. 
\end{lmma}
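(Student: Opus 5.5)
The plan is to build the sequence from the canonical quotient map and then to read off the extension properties from the concrete picture given by Theorem~\ref{faithful} and Corollary~\ref{ideal}.

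\textbf{Exactness.} By Remark~\ref{rem1}, $\mathcal{B}^{k}_{2n+1}\cong \mathcal{B}^{k+1}_{2n+1}/\langle z_{k+1}\rangle$, since imposing $z_{k+1}=0$ on the relations $\mathcal{R}^{k+1}_{2n+1}$ gives exactly $\mathcal{R}^{k}_{2n+1}$. Let $\alpha_k^{k+1}$ be the $*$-epimorphism sending $z_j\mapsto z_j$ for $j\leq k$ and $z_{k+1}\mapsto 0$. It exists by the universal property and has kernel $\langle z_{k+1}\rangle$. Concretely, after identifying with the $\Phi$-pictures, $\alpha_k^{k+1}$ is the map $\Psi^{k+1}$ of Lemma~\ref{Ar}. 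Exactness of $\chi_k$ is then immediate, and the sequence is unital because the quotient map preserves the unit.

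\textbf{Identifying the ideal and essentiality.} The excluded value $k=n$ is precisely the case $k+1=n+1$, where $z_{n+1}=0$ in $\mathcal{B}^{n+1}_{2n+1}$ and the ideal would vanish. For $k\neq n$ we have $k+1\neq n+1$, so Corollary~\ref{ideal} applies and gives
\[
\langle z_{k+1}\rangle=\mathcal{K}(L_0^{k+1})\otimes C(\mathbb{T})
\]
inside the faithful picture $\Phi^{k+1}_{2n+1}(\mathcal{B}^{k+1}_{2n+1})\subseteq \mathcal{L}(H_0^{k+1})$. Since $L_0^{k+1}$ is infinite-dimensional when $k\geq 1$, $k\neq n$, we get $\langle z_{k+1}\rangle\cong C(\mathbb{T})\otimes\mathcal{K}$. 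For homogeneity in the sense of Pimsner--Popa--Voiculescu, I will view $\mathcal{B}^{k+1}_{2n+1}$ as a $C(\mathbb{T})$-algebra of operator-valued functions acting on the continuous field $L_0^{k+1}\otimes C(\mathbb{T})$. The ideal then sits as $C(\mathbb{T},\mathcal{K}(L_0^{k+1}))$, and the Busby map lands in $C(\mathbb{T},\mathcal{Q}(L_0^{k+1}))$. For essentiality, I will show that the ideal is essential by checking that each fiber representation $\pi_{t,k+1}$ is irreducible and contains the compacts (Lemma~\ref{Inclusion of compacts}). It follows that an element $b$ with $b\langle z_{k+1}\rangle=0$ satisfies $\pi_{t,k+1}(b)\mathcal{K}=0$ for all $t$. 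Hence $\pi_{t,k+1}(b)=0$ for all $t$, and by Proposition~\ref{P2} and faithfulness this gives $b=0$.

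\textbf{Main obstacle.} The delicate step is the homogeneity claim itself. Concretely, I need the $C(\mathbb{T})$-structure to be compatible: the center-like generator $1\otimes S^*$ must act on the ideal $\mathcal{K}(L_0^{k+1})\otimes C(\mathbb{T})$ as multiplication by the coordinate function. I also need the fiberwise Busby maps $t\mapsto \pi_{t,k+1}$ modulo compacts to be injective on $\mathcal{B}^{k}_{2n+1}$ for each $t$. For the first point I would use that in $\Phi^{k+1}_{2n+1}$ the generators $z_1,\dots,z_{k}$ have the form (operator on $L_0^{k+1}$)$\,\otimes 1$, apart from the $S^*$ factor, so the quotient acts fiberwise. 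For the second point, I would check that for each $t$ the composition $\mathcal{B}^{k}_{2n+1}\to \mathcal{Q}(L_0^{k+1})$ is injective. The plan is to show that $\ker(\pi_{t,k+1})\cap \pi_{t,k+1}^{-1}(\mathcal{K})$ reduces to the ideal, using the primitive ideal description of Theorem~\ref{P1}: the only primitive ideal not containing $z_{k+1}$ at level $k+1$ is $I_{t,k+1}$, and all lower-level ideals contain $\langle z_{k+1}\rangle$.
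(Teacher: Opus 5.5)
Your treatment of exactness (the universal property in Remark~\ref{rem1} is equivalent to the paper's check that irreducible representations killing $z_{k+1}$ factor through $\alpha_k^{k+1}$), of unitality, of the identification $\langle z_{k+1}\rangle=\mathcal{K}(L_0^{k+1})\otimes C(\mathbb{T})$ via Corollary~\ref{ideal}, and of essentiality is fine. The gap is in homogeneity, which is the real content of the lemma.

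The fibre of the Busby map at $t$ is $\tau_t(\alpha_k^{k+1}(b))=\pi_{t,k+1}(b)+\mathcal{K}$. So $\ker\tau_t=\alpha_k^{k+1}\bigl(\pi_{t,k+1}^{-1}(\mathcal{K})\bigr)$, and $\tau_t$ is injective if and only if $\pi_{t,k+1}^{-1}(\mathcal{K})\subseteq\langle z_{k+1}\rangle$. Your stated target, that $\ker(\pi_{t,k+1})\cap\pi_{t,k+1}^{-1}(\mathcal{K})$ ``reduces to the ideal'', is not this. That intersection is just $\ker\pi_{t,k+1}$, which does not contain $z_{k+1}$ and so never equals $\langle z_{k+1}\rangle$.

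More importantly, the two facts you invoke (that $\ker\pi_{t,k+1}$ is the only level-$(k+1)$ primitive ideal over $t$, and that all lower-level primitive ideals contain $\langle z_{k+1}\rangle$) only give the trivial inclusion $\langle z_{k+1}\rangle\subseteq\pi_{t,k+1}^{-1}(\mathcal{K})$. That inclusion is already clear because $\pi_{t,k+1}(z_{k+1})$ is compact. What you need is the reverse: every lower-level primitive ideal $\ker\pi_{t',s}$ with $s\leq k$, for \emph{every} $t'\in\mathbb{T}$, must contain $\pi_{t,k+1}^{-1}(\mathcal{K})$. Equivalently, $\ker\pi_{t,k+1}\subseteq\langle z_{k+1}\rangle$ together with $\pi_{t,k+1}(\langle z_{k+1}\rangle)=\mathcal{K}$. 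Proposition~\ref{P2} only compares kernels with the same parameter $t$, so it does not supply this. Nor should you quote Theorem~\ref{P1} as a black box: with $\mathrm{ind}_A$ defined as an infimum, the topology $\mathcal{O}$ as written makes the singleton $\{(t,k+1)\}$ closed. That is incompatible with Proposition~\ref{P2}, which forces $\ker\pi_{t,1}$ into the closure of $\{\ker\pi_{t,k+1}\}$.

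The missing step comes from the observation you already make for your ``first point''.
\begin{itemize}
\item In the picture $\Phi^{k+1}_{2n+1}$, the generators $z_1,\dots,z_k$ lie in $D\otimes 1$ for a $C^*$-algebra $D\subseteq\mathcal{L}(L_0^{k+1})$.
\item Hence $\mathcal{B}^{k+1}_{2n+1}=D\otimes 1+\mathcal{K}(L_0^{k+1})\otimes C(\mathbb{T})$, a $C^*$-subalgebra plus the ideal of Corollary~\ref{ideal}.
\item As in the proof of Theorem~\ref{faithful}, $\pi_{t,k+1}$ is, up to unitary equivalence, evaluation at $t$.
\item For $b=d\otimes 1+f$ this gives $\pi_{t,k+1}(b)=d+f(t)$. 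This is compact if and only if $d\in\mathcal{K}$, i.e.\ if and only if $b\in\mathcal{K}(L_0^{k+1})\otimes C(\mathbb{T})=\langle z_{k+1}\rangle$.
\end{itemize}
Therefore $\pi_{t,k+1}^{-1}(\mathcal{K})=\langle z_{k+1}\rangle$ for every $t$, so every $\tau_t$ is injective, and essentiality follows as a by-product.

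If you prefer an ideal-theoretic route, the input you need is Corollary~\ref{cor1}(ii) in $\mathcal{B}^{k+1}_{2n+1}$, applied to $\ker\pi_{t,k+1}$. The paper uses the same kind of input one level down. Since $\ker\tau_{t_0}\subseteq\mathcal{B}^k_{2n+1}$ does not contain $z_k$, the primitive ideal structure places it inside $\langle z_k\rangle\cong\mathcal{K}\otimes C(\mathbb{T})$. It therefore has the form $\mathcal{K}\otimes I_F(\mathbb{T})$, and a nonzero such ideal is then excluded.
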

\prf Fix $1\leq k \leq 2n-1$ with $k \neq n$. Clearly, $z_{k+1}\in \ker (\alpha_{k}^{k+1})$, and hence $\langle z_{k+1}\rangle \subset \ker (\alpha_{k}^{k+1})$. To prove the rest, it suffices to show that all irreducible representations of $\mathcal{B}^{k+1}_{2n+1}$ that vanish on $\langle z_{k+1}\rangle$ factor through 
$\alpha_{k}^{k+1}: \mathcal{B}^{k+1}_{2n+1} \rightarrow\mathcal{B}^{k}_{2n+1}$. Let $\pi$ be an irreducible representation of $\mathcal{B}^{k+1}_{2n+1}$ acting on $\clh$ such that $\pi(z_{k+1})=0$. Then the elements $\{\pi(z_i): 1\leq i \leq k+1\}$ satisfy the defining relations $\mathcal{R}_{2n+1}^{k+1}$ of $\mathcal{B}^{k+1}_{2n+1}$. Since $\pi(z_{k+1})=0$, it is easy to see that $\{\pi(z_i): 1\leq i \leq k\}$ satisfy the defining relations $\mathcal{R}_{2n+1}^k$ of $\mathcal{B}^{k}_{2n+1}$, and hence there exists a $C^*$-homomorphism 
$$\varphi: \mathcal{B}^{k}_{2n+1} \rightarrow \mathcal{L}(\clh): z_i\mapsto \pi(z_i) \quad \mbox{ for } 1\leq i \leq k.$$
Thus, we have the commutative diagram 
\[
\begin{tikzcd}
	\mathcal{B}^{k+1}_{2n+1} \arrow[rr, "\pi"] \arrow[dr, "\alpha_{k}^{k+1}"'] && \mathcal{L}(\clh) \\
	& \mathcal{B}^{k}_{2n+1} \arrow[ur, "\varphi"'] &
\end{tikzcd}
\]
which completes the first part of the claim. \\
That, $\chi_k$ is a $C^*$-extension of $\mathcal{B}^{k}_{2n+1}$ by 
$C(\bbbt)\otimes \clk$ follows from Corollary \ref{ideal}. Moreover, we have 
$$\alpha_k^{k+1}(1)= \alpha_k^{k+1}(\sum_{i=1}^{k+1} z_i^*z_i)=\sum_{i=1}^{k+1}\,  \alpha_k^{k+1}(z_i^*z_i)= \sum_{i=1}^{k}\,z_i^*z_i=1.$$
This proves that $\chi_k$ is unital. 
To show that $\chi_k$ is homogeneous,  let $\tau^k: \mathcal{B}^{k}_{2n+1} \rightarrow Q(\bbbt)$ be its Busby invariant.  For $t_0 \in \bbbt$, 
define $\tau_{t_0}^k : \mathcal{B}^{k}_{2n+1} \rightarrow Q$ to be $ev_{t_0} \circ \tau^k$.  Assume that 
$\mathfrak{I}_{t_0}=\ker(\tau_{t_0}^k)$. We need to prove that $\mathfrak{I}_{t_0}=\left\{0\right\}$ for all $t_0 \in \bbbt$.

First, observe that $z_k \notin \mathfrak{I}_{t_0}^k$, therefore,  from Proposition \ref{P3}, we have 
$$\mathfrak{I}_{t_0}^k =\cap_{ \mathfrak{I}_{t_0}^k \subset I_{t,k}^{k,n}} I_{t,r}^{k,n}
=\mathcal{K}(L_0^k)  \otimes  I_F(\bbbt) $$ for some closed 
subset $F$ of $\bbbt$ where $I_F(\bbbt)$ is the closed ideal  of all continuous functions on $\bbbt$ vanishing on $F$. 
However, 
$$\tau_{t_0}(z_k- p^{\otimes (k-2)} \otimes t) \neq 0 \mbox{ for any } t \in \bbbt,$$ 
which further implies that $F=\bbbt$ and $\mathfrak{I}_{t_0}=\left\{0\right\}$. 

\qed
\bthm \label{K-groups-B}
For $1 \leq k \leq 2n+1$ with $k \neq n+1$, define $u_k=  z_k+1-z_kz_k^*$. 
Then one has 
\begin{IEEEeqnarray*}{rCl}
	K_0(\mathcal{B}^k_{2n+1})&=&\begin{cases}
		\langle[1]\rangle \cong \bbz & \mbox{ if } 1 \leq  k \leq n, \cr 
		\langle[1]\rangle \oplus \langle[z_{n+1}^*z_{n+1}]\rangle  \cong \bbz \oplus \bbz/2\bbz & \mbox{ if } n+2 \leq k \leq 2n+1, \cr 
	\end{cases} \\
	K_1(\mathcal{B}^k_{2n+1})&=&\langle [u_k] \rangle\cong \bbz.
\end{IEEEeqnarray*}
\ethm
\prf The claim follows by chasing the six-term exact sequence of $K$-groups obtained from the short exact sequence of $C^*$-algebras given in Lemma~\ref{ses}. We omit the details of the proof as it proceeds along the same lines as the proof of Theorem~4.3 in \cite{BhuBisSau-2026aa}. 
\qed 

\bthm \label{mQDS}
For $1 \leq k \leq 2n+1$, we have
\[
\mathcal{B}^k_{2n+1} \;\cong\; 
\begin{cases}
	\Sigma^{2(k-1)} C(\bbbt), & \text{if } 1 \leq k \leq n, \cr
	\Sigma^{2(n-1)} C(\bbbt), & \text{if } k= n+1, \cr
	\Sigma^{2(k-n-1)} \,\Sigma_2^2 \,\Sigma^{2(n-1)} C(\bbbt), & \text{if } n+2 \leq k \leq 2n. \cr 
\end{cases}
\]
In particular, 
\[
C\bigl(SO_0(2n+1)/SO_0(2n-1)\bigr) \;\cong\; \Sigma^{2(n-1)} \,\Sigma_2^2 \,\Sigma^{2(n-1)} C(\bbbt).
\]
\ethm
\prf From Lemma~\ref{ses}, $\mathcal{B}^k_{2n+1}$ appears as the middle $C^*$-algebra of a unital homogeneous $C^*$-extension $\chi_k$ of $\mathcal{B}^{k}_{2n+1}$ by 
$C(\bbbt)\otimes \clk$. Using a straightforward induction and comparing the $K$-groups, the claim follows from Proposition~3.11 in \cite{Sau-2026aa}. 
\qed 
\bcrlre
For all $q \in [0,1)$, the $C^*$-algebras $C\bigl(SO_q(2n+1)/SO_q(2n-1)\bigr)$ are isomorphic.
\ecrlre
\prf It is an immediate consequence of Theorem~\ref{mQDS} and Theorem~3.12 in \cite{Sau-2026aa}.
\qed

\bigskip
		
\noindent{\sc Surajit Biswas} (\texttt{sbiswas.maths@nitdgp.ac.in},  \texttt{241surajit@gmail.com})\\
	{\footnotesize Department of Mathematics, National Institute of Technology, Durgapur-713209, West Bengal, India}\\

\noindent{\sc Vinay Deshpande} (\texttt{deshpandevinay@iitgn.ac.in})\\
    {\footnotesize Department of Mathematics, Indian Institute of Technology, Gandhinagar, Palaj, Gandhinagar 382055, India}\\

\noindent{\sc Bipul Saurabh} (\texttt{bipul.saurabh@iitgn.ac.in},  \texttt{saurabhbipul2@gmail.com})\\
	{\footnotesize Department of Mathematics, Indian Institute of Technology, Gandhinagar, Palaj, Gandhinagar 382055, India}

\end{document}